\documentclass[journal,a4paper]{IEEEtran} \pagestyle{empty}
\usepackage{stmaryrd}
\usepackage{amsfonts}
\usepackage{graphicx,times,amsmath}
\usepackage{graphics,color}
\usepackage{graphicx}
\usepackage{latexsym}
\usepackage{amsfonts}
\usepackage{amssymb}
\usepackage{url}
\usepackage{subfigure}
\usepackage{times}
\usepackage{color}
\usepackage{cite}
\newtheorem{theorem}{Theorem}
\newtheorem{lemma}{Lemma}

\newtheorem{corollary}{Corollary}
\newtheorem{definition}{Definition}

\newtheorem{remark}{Remark}

\newcommand{\oneton}{1,\cdots,n}

\newcommand{\Rn}{\mathbb{R}^{n}}
\newcommand{\Rnn}{\mathbb{R}^{n\times n}}

\DeclareMathOperator{\diag}{diag} 
 
 \graphicspath{/figures}

\DeclareMathOperator{\var}{\mathrm{var}}


\hyphenation{op-tical net-works semi-conduc-tor}

\begin{document}

\title{Pinning consensus in networks of multiagents via a single impulsive controller}

\author{Bo~Liu, Wenlian Lu and~
        Tianping~Chen,~\IEEEmembership{Senior Member,~IEEE}
\thanks{This work is jointly supported by the National Natural
Sciences Foundation of China under Grant Nos. 61273211, 60974015,
and 61273309, the Foundation for the Author of National Excellent
Doctoral Dissertation of P.R. China No. 200921, the Shanghai
Rising-Star Program (No. 11QA1400400), the Marie Curie International
Incoming Fellowship from the European Commission
(FP7-PEOPLE-2011-IIF-302421), and China Postdoctoral Science
Foundation No. 2011M500065.}
\thanks{Bo Liu is with School of Mathematical Sciences, Fudan
University, Shanghai, People's Republic of China, and Institute of
Industrial Science, The University of Tokyo, 4-6-1 Komaba,
Meguro-ku, Tokyo 153-8505, Japan (liu7bo9@gmail.com).}
\thanks{Wenlian Lu is with Centre for Computational Systems Biology, and Laboratory of Mathematics for
Nonlinear Science, School of Mathematical Sciences, Fudan
University, Shanghai, People's Republic of China
(wenlian@fudan.edu.cn).}
\thanks{Tianping Chen is with the School of Mathematical Sciences,
Fudan University, 200433, Shanghai, China. (Corresponding author,
email: tchen@fuan.edu.cn).  } }

\maketitle

\begin{abstract}
\boldmath {\bf In this paper, we discuss pinning consensus in
networks of multiagents via impulsive controllers. In particular, we
consider the case of using only one impulsive controller. We provide
a sufficient condition to pin the network to a prescribed value. It
is rigorously proven that in case the underlying graph of the
network has spanning trees, the network can reach consensus on the
prescribed value when the impulsive controller is imposed on the
root with appropriate impulsive strength and impulse intervals.
Interestingly, we find that the permissible range of the impulsive
strength completely depends on the left eigenvector of the graph
Laplacian corresponding to the zero eigenvalue and the pinning node
we choose. The impulses can be very sparse, with the impulsive
intervals being lower bounded. Examples with numerical simulations
are also provided to illustrate the theoretical results.}
\end{abstract}

\begin{keywords}
consensus, synchronization, multiagent systems, impulsive pinning
control.
\end{keywords}

\section{Introduction}
Coordinated and cooperative control of teams of autonomous systems
has received much attention in recent years. Significant research
activity has been devoted to this area. In the cooperation, group of
agents seek to reach agreement on a certain quantity of interest.
This is the so-called {\em consensus} problem, which has a long
history in computer science. Recently, consensus problem reappeared
in the cooperative control of multi-agent systems and has gained
renewed interests due to the broad applications of multi-agent
systems. A great deal of papers have addressed this problem. For a
review of this area, see the surveys \cite{Ren_2005,Olfati_2007} and
references therein.

The basic idea of consensus is that each agent updates its state
based on the states of its neighbors and its own such that the
states of all agents will converge to a common value. The
interaction rule that specifies the information exchange between an
agent and its neighbors is called the consensus algorithm.

The following is an example of continuous-time consensus algorithm:
\begin{eqnarray}\label{eqnPulse}
\dot{x}_{i}(t)=\sum_{j=1,j\ne
i}^{n}a_{ij}[x_{j}(t)-x_{i}(t)],i=1,\cdots,n
\end{eqnarray}
where $x_{i}(t)\in {\mathbb R}$ is the state of agent $i$ at time
$t$, $a_{ij}\ge 0$ for $i\ne j$ is the coupling strength from agent
$j$ to agent $i$.

Let $a_{ii}=-\sum_{j=1,j\ne i}^{n}a_{ij}$ for $i=1,2,\cdots,n$, we
can have
\begin{eqnarray}\label{eqnPulsea}
\dot{x}_{i}(t)=\sum_{j=1}^{n}a_{ij}x_{j}(t),~i=1,\cdots,n.
\end{eqnarray}


A topic closely related to consensus is synchronization, which can
be written as the following Linearly Coupled Ordinary Differential
Equations (LCODEs):
\begin{eqnarray}
\frac{d x^{i}(t)}{dt}=f(x^{i}(t),t)+c\sum\limits_{j=1}^{n}a_{ij}
x^{j}(t),\quad i=\oneton\label{synn}
\end{eqnarray}
where $x^{i}(t)\in {\mathbb R}^{m}$ is the state variable of the
$i$th node at time $t$, $f$: ${\mathbb
R}^{m}\times[0,+\infty)\rightarrow \mathbb{R}^{m}$ is a continuous
map, $A=[a_{ij}]\in {\mathbb R}^{n\times n}$ is the coupling matrix
with zero-sum rows and $a_{ij}\ge 0$, for  $i\ne j$, which is
determined by the topological structure of the LCODEs.

There are lots of papers discussing synchronization in various
circumstances.

It is clear that the consensus is a special case of synchronization
($f=0$, $m=1$). Therefore, all the results concerning
synchronization can apply to consensus.

It was shown in \cite{Lu2004,Lu2006} that under some assumptions, we
have
\begin{eqnarray}
\lim_{t\rightarrow\infty}||x^{i}(t)-\sum_{j=1}^{n}\xi_{j}x^{j}(t)||=0,~~i=1,\cdots,n,
\end{eqnarray}
where $[\xi_{1},\cdots,\xi_{n}]^{\top}$ is the left eigenvector of
$A$ corresponding to the eigenvalue $0$ satisfying
$\sum_{j=1}^{n}\xi_{j}=1$.

Since in the consensus model,
\begin{eqnarray}
\sum_{j=1}^{n}\xi_{j}x_{j}(t)=\sum_{j=1}^{n}\xi_{j}x_{j}(0)
\end{eqnarray}
for all $t>0$, we have
\begin{eqnarray}
\lim_{t\rightarrow\infty}|x_{i}(t)-\sum_{j=1}^{n}\xi_{j}x_{j}(0)|=0,~~i=1,\cdots,n.
\end{eqnarray}

It can be seen that the agreement value
$\sum_{j=1}^{n}\xi_{j}x_{j}(0)$ strongly depends on the initial
value, which means that the agreement value of the consensus
algorithm is neutral stable (or semi-stable used in some papers).
The concept of neutral stability is used in physics and other
research fields. For example, the principal subspace extraction
algorithms and principal component extraction algorithms discussed
in \cite{Pca,Psa}. A set of equilibrium points is called neutral
stable for a system, if each equilibrium is Lyapunov stable, and
every trajectory that starts in a neighborhood of an equilibrium
converges to a possibly different equilibrium. Similarly, a set of
manifolds is called neutral stable for a system, if every manifold
is invariant, and when there is a small perturbation, the state will
stay in another manifold and never return.

In \cite{Pca}, the manifold discussed is neutral stable, and if the
algorithm is restricted to the manifold, the equilibrium is stable.
Instead, in \cite{Psa}, the equilibrium is neutral stable and the
Stiefel manifold is stable.

In consensus algorithm, the consensus manifold $S=\{x\in
\mathbb{R}^{n}:~x_{1}=x_{2}=\cdots=x_{n}\}$ is the set of
equilibrium points, which is stable. Instead, every point $x\in S$
is neutral stable.

However, in some cases, it is desired that all states converge to a
prescribed value, say, some $s\in \mathbb{R}$. For example, in a
military system, if one wants to use a missile network to attack
some object of the enemy, then it is required that all the missiles
from different military bases should finally hit the same point (see
\cite{JHill_SICON_2011}). Generally, for this purpose, one can make
every state $x_{i}(t)$ converge to $s$ by imposing a negative
feedback term $-[x_{i}(t)-s]$ to agent $i$. However, due to the
interaction of the network, it is not necessary to impose
controllers on all the nodes. This is the basic idea of the pining
control technique, which is an effective class of control schemes.
Generally, in a pinning control scheme, we only need to impose
controllers on a small fraction of the nodes. This is a big
advantage because in large complex networks, it is usually difficult
if not impossible to add controllers to all the nodes. Recently,
pinning strategies have been used in the control of dynamical
networks. For example, decentralized adaptive pinning strategies
have been proposed in \cite{Porfiri_Chaos_2011,SuH_TSMC_2012} for
controlled synchronization of complex networks. And pinning
consensus algorithms have been proposed in
\cite{ChenFei_Automatica_2009,WangZD_TNN_2011}.

Most works on pinning control consider pining a fraction of the
nodes. However, there are a few works that consider pinning only one
node. In \cite{Chen2007}, it was proved that if $\varepsilon>0$, the
following coupled network with a single controller
\begin{eqnarray}
\left\{\begin{array}{cc}\displaystyle\frac{dx^{1}(t)}{dt}&=f(x^{1}(t),t)+c\sum\limits_{j=1}^{n}a_{1j}x^{j}(t)\\
&-c\varepsilon[x^{1}(t)-s(t)],\\
\displaystyle\frac{dx^{i}(t)}{dt}&=f(x^{i}(t),t)+c\sum\limits_{j=1}^{n}a_{ij}x^{j}(t),\\
&i=2,\cdots,n \end{array}\right. \label{pin}
\end{eqnarray}
can pin the complex dynamical network $(\ref{synn})$ to $s(t)$, if
$c$ is chosen suitably. Therefore, the following coupled network
with a single controller
\begin{align}\label{pin1}
\left\{
\begin{array}{ll}
&\dot{x}_{1}(t)=\sum_{j=1}^{n}a_{ij}x_{j}(t)-\epsilon [x_{1}(t)-s],\\
&\dot{x}_{i}(t)=\sum_{j=1}^{n}a_{ij}x_{j}(t),~~i=2,\cdots,n\end{array}\right.
\end{align}
can make every state $x_{i}(t)$ converge to $s$.

It is worth noticing that the above mentioned works all consider
continuous time feedback controllers and the disadvantage of such
controllers lies in that the controller must be imposed at every
time $t$. So it is not applicable to systems which can not endure
continuous disturbances. One can ask if we can pin the network only
at a very sparse time sequence to make every state $x_{i}(t)$
converge to $s$ for the consensus algorithm (\ref{eqnPulsea}).

Actually, to avoid such disadvantages, some discontinuous control
schemes, such as act-and-wait concept
control\cite{Insperger_2006,Insperger_2007}, intermittent
control\cite{CaoJD_2009,LiuXW_TNN_2011} and impulsive
technique\cite{YangXu_2005,LiuXZ_1994,LiuXZ_1996,Chua_TCAS_1997,YangT_2001}
have already been developed and used in the control of dynamical
systems. Particularly, in recent years, impulsive technique has been
successfully used in many areas such as neural
networks\cite{YangXu_2005}, control of spacecraft\cite{LiuXZ_1996},
secure communications\cite{Chua_TCAS_1997} and so on.

Compared to continuous-time controllers, impulsive controllers have
some obvious advantages. First, we only need to impose controllers
at a very sparse sequence of time points. Besides, it is typically
simpler and easier to implement. Recently, impulsive control
techniques have been used in the controlled synchronization and
consensus of complex networks. For example, in
\cite{GuanZH_TCAS_2010}, an impulsive distributed control scheme was
proposed to synchronize dynamical complex networks with both system
delay and multiple coupling delays. In \cite{XuZY_2008}, impulsive
control technique has been used in projective synchronization of
drive-response networks of coupled chaotic systems. In
\cite{TangY_Neurocomputing_2010}, the authors used impulsive control
technique to synchronize stochastic discrete-time networks. In
\cite{JHill_SICON_2011}, the authors proposed an impulsive hybrid
control scheme for the consensus of a network with nonidentical
nodes. Yet in these works, the controllers are imposed on all the
nodes of the networks. To take advantage of both the impulsive and
pinning control techniques, impulsive pinning technique has been
proposed which combines these two control techniques as a whole.
That is, the impulsive controllers are imposed only on a small
fraction of the nodes. For example, in
\cite{Zhou_TCAS_2011,CaoJD_TNN_2012}, impulsive pinning control
technique is used to stabilize and synchronize complex networks of
dynamical systems. In this paper, we will introduce this technique
into the pinning consensus algorithm. We show if the underlying
graph has spanning trees, then a single impulsive controller imposed
on one root is able to drive the network to reach consensus on a
given value when the impulsive strength is in a permissible range
and the impulse is sparse enough.

The rest of the paper is organized as follows. In Section
\ref{secPreliminaries}, some mathematical preliminaries are
presented; In Section \ref{secMainResults}, the sufficient
conditions for pinning consensus via one impulsive controller on
strongly connected graphs are proposed and proved; The results are
extended to graphs with spanning trees in Section
\ref{secGeneralResults}; Examples with numerical simulations are
provided in Section \ref{secNumericalSimulation} to illustrate the
theoretical results; And the paper is concluded in Section
\ref{secConclusion}.

\section{Mathematical preliminaries}\label{secPreliminaries}
In this section, we present some notations, definitions and lemmas
concerning matrix and graph theory that will be used later.

First, we introduce following definitions and notations from
\cite{Lu2006}.


\begin{definition} 
Suppose $A=[a_{ij}]_{i,j=1}^{n}\in \Rnn$. If
\begin{enumerate}
\item $a_{ij}\geq 0, \quad i\ne j,\qquad
a_{ii}=-\sum\limits_{j=1,j\neq i}^{n}a_{ij}, \quad i=\oneton$;

\item real parts of eigenvalues of $A$ are all negative except an
eigenvalue $0$ with multiplicity $1$,
\end{enumerate}
then we say $A\in{\bf A}_{1}$.
\end{definition}

\begin{definition}
Suppose $A=[a_{ij}]_{i,j=1}^{n}\in \Rnn$. If
\begin{enumerate}
\item $a_{ij}\geq 0, \quad i\ne j,\qquad
a_{ii}=-\sum\limits_{j=1,j\neq i}^{n}a_{ij}, \quad i=\oneton$;

\item $A$ is irreducible.
\end{enumerate}
Then we say $A\in{\mathbf A}_{2}$.
\end{definition}

It is clear that $\mathbf {A}_{2}\subseteq \mathbf{A}_{1}$.

By Gersgorin theorem  and Perron Frobenius theorem, we have the
following result.

\begin{lemma}\cite{Lu2006}.\label{lemMatrix}
If $A\in \mathbf {A}_{1}$, then the following items are valid:
\begin{enumerate}
\item If $\lambda$ is an eigenvalue of $A$ and $\lambda\neq 0$,
then  $Re(\lambda)<0$;

 \item $A$ has an eigenvalue $0$ with
multiplicity 1 and the right eigenvector $[1,1,\dots,1]^{\top}$;

\item  Suppose $\xi=[\xi_{1},\xi_{2},\cdots,\xi_{n}]^{\top}\in
\Rn$ (without loss of generality, assume
$\sum\limits_{i=1}^{n}\xi_{i}=1$) is the left eigenvector of $A$
corresponding to eigenvalue $0$. Then, $\xi_{i}\ge 0$ holds for all
$i=\oneton$; more precisely,

\item $A\in\mathbf{A}_{2}$ if and only if  $\xi_{i}>0$ holds for all
$i=\oneton$;

\item $A$ is reducible if and only if for some $i$, $\xi_{i}=0$.
In such case, by suitable rearrangement, assume that
$\xi^{\top}=[\xi_{+}^{\top},\xi_{0}^{\top}]$, where
$\xi_{+}=[\xi_{1},\xi_{2},\cdots,\xi_{p}]^{\top}\in \mathbb{R}^{p}$,
with all $\xi_{i}>0$, $i=1,\cdots,p$, and
$\xi_{0}=[\xi_{p+1},\xi_{p+2},\cdots,\xi_{n}]^{\top}\in
\mathbb{R}^{n-p}$ with all $\xi_{j}=0$, $p+1\le j\le n$. Then, $A$
can be rewritten as $\left[\begin{array}{cc}A_{11}& A_{12}\\A_{21}&
A_{22}\end{array}\right]$  where $A_{11}\in \mathbb{R}^{p\times p}$
is irreducible and $A_{12}=0$.
\end{enumerate}
\end{lemma}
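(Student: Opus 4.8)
\noindent
The plan is to derive all five items from two classical tools --- Gershgorin's disc theorem and the Perron--Frobenius theorem --- after shifting $A$ into a nonnegative matrix. Items 1 and 2 are essentially bookkeeping: the zero row-sum condition gives $A[1,\dots,1]^{\top}=0$, so $0$ is an eigenvalue with right eigenvector $[1,\dots,1]^{\top}$, and its multiplicity is $1$ by the hypothesis $A\in\mathbf{A}_{1}$. For item 1, each Gershgorin disc of $A$ is centred at $a_{ii}=-\sum_{j\ne i}a_{ij}\le 0$ with radius $\sum_{j\ne i}|a_{ij}|=-a_{ii}$, hence lies in the closed left half-plane and meets the imaginary axis only at the origin; so every eigenvalue with zero real part must be $0$, and all other eigenvalues have strictly negative real part.

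For item 3, fix $c>\max_{i}(-a_{ii})$ and put $B=A+cI$. Then $B$ is entrywise nonnegative and, because $A$ has zero row sums, $\|B\|_{\infty}=c$, so $\rho(B)\le c$; since $0$ is an eigenvalue of $A$, $c$ is an eigenvalue of $B$, whence $\rho(B)=c$. The Perron--Frobenius theorem for nonnegative matrices yields a nonnegative left eigenvector of $B$ for $\rho(B)=c$, i.e. a nonnegative left null vector $\xi$ of $A$; the simplicity of the eigenvalue $0$ makes $\xi$ unique up to scaling, and since $\xi\ge 0$, $\xi\ne 0$, we may normalize $\sum_{i}\xi_{i}=1$. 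This gives $\xi_{i}\ge 0$ for all $i$.

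Items 4 and 5 I would prove together, by induction on $n$. If $A$ is irreducible, so is $B$, and the irreducible Perron--Frobenius theorem forces $\xi>0$ componentwise; this is the forward implication of item 4. For the remaining content, suppose some $\xi_{i}=0$ and set $S=\{i:\xi_{i}>0\}$, which is nonempty and proper. Reading the equation $\xi^{\top}A=0$ column by column at indices $i\notin S$, and using $\xi_{k}>0$ and $a_{ki}\ge 0$, forces $a_{ki}=0$ for all $k\in S$, $i\notin S$; listing $S$ first this is precisely the block form $\left[\begin{array}{cc}A_{11}&0\\A_{21}&A_{22}\end{array}\right]$, so $A_{12}=0$ and $A$ is reducible. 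The zero block also makes $A_{11}$ a zero-row-sum, nonnegative-off-diagonal matrix, and comparing $\det(A-\lambda I)=\det(A_{11}-\lambda I)\det(A_{22}-\lambda I)$ with the simplicity of $0$ for $A$ shows $A_{22}$ is nonsingular and $0$ is a simple eigenvalue of $A_{11}$, hence $A_{11}\in\mathbf{A}_{1}$; the inductive hypothesis applied to $A_{11}$, whose left null vector $\xi_{+}$ is strictly positive, gives that $A_{11}$ is irreducible. It remains to establish the converse in item 5, namely that $A$ reducible implies some $\xi_{i}=0$; equivalently, $\xi>0$ implies $A$ irreducible. If $A$ were reducible we could permute it to block lower-triangular form $\left[\begin{array}{cc}X&0\\W&Z\end{array}\right]$; the top rows force $X$ to have zero row sums, so $0\in\mathrm{spec}(X)$, and simplicity of $0$ for $A$ then forces $Z$ nonsingular; but the trailing part of $\xi^{\top}A=0$ reads $v^{\top}Z=0$, so $v=0$, contradicting $\xi>0$.

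The main obstacle, I expect, is this last block-structure analysis: one must make sure the permuted diagonal blocks still inherit the nonnegative-off-diagonal and zero-row-sum properties needed to reapply Gershgorin and the induction, read ``$A_{12}=0$'' off the \emph{column} equations of $\xi^{\top}A=0$ rather than the row equations, and invoke the simplicity of the zero eigenvalue at exactly the point where it pins that eigenvalue to a single diagonal block. Everything else is a routine application of Perron--Frobenius and Gershgorin.
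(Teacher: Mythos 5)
Your proposal is correct, and it follows essentially the route the paper itself indicates: the paper gives no proof beyond citing \cite{Lu2006} and noting that the result follows from the Gersgorin and Perron--Frobenius theorems, which is exactly the machinery you use (Gersgorin for item 1, the shift $B=A+cI$ with Perron--Frobenius for items 3--4, and the support/block-triangular analysis with simplicity of the zero eigenvalue for item 5). The only cosmetic remarks are that items 1--2 are largely built into the definition of $\mathbf{A}_{1}$, and the $1\times 1$ base case of your induction deserves a word about the irreducibility convention.
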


\begin{remark}
By Lemma 1, for $A\in \mathbf{A}_{2}$, let
$\Xi=\diag[\xi_{1},\cdots,\xi_{n}]$ be the diagonal matrix generated
by the left eigenvector of $A$ corresponding to the eigenvalue $0$.
Then $\Xi A+A^{\top}\Xi\in \mathbf{A}_{2}$ is symmetric. Therefore,
its eigenvalues are real and satisfy $0=\lambda_{1}>\lambda_{2}\ge
\lambda_{3}\ge\cdots\ge \lambda_{n}$.
\end{remark}

A {\em weighted directed graph} of order $n$ is denoted by a triple
$\{\mathcal{V},\mathcal{E},A\}$, where
$\mathcal{V}=\{v_{1},\cdots,v_{n}\}$ is the vertex set,
$\mathcal{E}\subseteq \mathcal{V}\times \mathcal{V}$ is the edge
set, i.e., $e_{ij}=(v_{i},v_{j})\in \mathcal{E}$ if and only if
there is an edge from $v_{i}$ to $v_{j}$, and $A=[a_{ij}]$,
$i,j=1,\cdots, n$, is the weight matrix which is a nonnegative
matrix such that for $i,j\in\{1,\cdots,n\}$, $a_{ij}>0$ if and only
if $i\ne j$ and $e_{ji}\in\mathcal{E}$. For a weighted directed
graph $\mathcal{G}$ of order $n$, the graph Laplacian
$L(\mathcal{G})=[l_{ij}]_{i,j=1}^{n}$ can be defined from the weight
matrix $A$ in the following way:
\begin{eqnarray*}
l_{ij}=\left\{
\begin{array}{cc}
-a_{ij} & i\ne j\\
\sum\limits_{k=1,k\ne i}^{n}a_{ik}& j=i.
\end{array}
\right.
\end{eqnarray*}
A {\em(directed)  path} of length $l$ from vertex $v_{i}$ to $v_{j}$
is a sequence of $l+1$ distinct vertices $v_{r_{1}},\cdots,
v_{r_{l+1}}$ with $v_{r_{1}}=v_{i}$ and $v_{r_{l+1}}=v_{j}$ such
that $(v_{r_{k}},v_{r_{k+1}})\in \mathcal{E}(\mathcal{G})$ for
$k=1,\cdots,l$. A graph $\mathcal{G}$ is strongly connected if for
any two vertices $v$ and $w$ of $\mathcal{G}$, there is a directed
path from $v$ to $w$. A graph $\mathcal{G}$ contains a {\em spanning
(directed) tree} if there exists a vertex $v_{i}$ such that for all
other vertices $v_{j}$ there's a directed path from $v_{i}$ to
$v_{j}$, and $v_{i}$ is called the {\em root}.
\begin{remark}
From graph theory, a graph is strongly connected if and only if its
graph Laplacian $L$ satisfies $-L\in \mathbf{A}_{2}$.
\end{remark}

\section{Pinning consensus on strongly connected graphs}\label{secMainResults}

Consider the following consensus algorithm with a single impulsive
controller:
\begin{eqnarray}\label{eqnPulse}
\left\{
\begin{array}{ll}
\dot{x}(t)=-L x(t),&t\ne t_{k},\\
\Delta x_{r}(t_{k})=b_{k}[s-x_{r}(t_{k}^{-})],\\
\Delta x_{i}(t_{k})=0,& i\ne r. \end{array}\right.~~k=0,1,2,\cdots
\end{eqnarray}
where $L=[l_{ij}]$ is the graph Laplacian of the underlying graph,
$b_{k}$ is the strength of the impulse at time $t_{k}$, and
$0=t_{0}<t_{1}<t_{2}<\cdots$.

Without loss of generality, in the following, we always assume $s=0$ (by letting
$y_{i}(t)=x_{i}(t)-s$ and consider the new system of $y$) and $r=1$
(by suitable rearrangement when necessary). In this case, what we
need to do is to prove
\begin{eqnarray}
\lim_{t\rightarrow\infty}x_{i}(t)=0,~~ i=1,\cdots,n
\end{eqnarray}
for the following system
\begin{eqnarray}\label{eqnPulse1}
\left\{
\begin{array}{ll}
\dot{x}_{i}(t)=-\sum_{j=1}^{n}l_{ij}x_{j}(t),i=1,\cdots,n,~t\ne t_{k},\\
 x_{1}(t_{k}^{+})=(1-b_{k})x_{1}(t_{k}^{-}), \\
 x_{i}(t_{k}^{+})=x_{i}(t_{k}^{-}), i=2,3,\cdots,n. \end{array}\right.
\end{eqnarray}

Given $x(t)=[x_{1}(t),\cdots,x_{n}(t)]^{\top}$, denote
\begin{eqnarray}
\bar{x}(t)=\sum_{i=1}^{n}\xi_{i}x_{i}(t),
\end{eqnarray}
where $[\xi_{1},\cdots,\xi_{n}]^{\top}$ is the left eigenvector of
$L$ corresponding to the eigenvalue $0$ satisfying
$\sum_{i=1}^{n}\xi_{i}=1$, and
 $\Delta
t_{k}=t_{k+1}-t_{k}$, $k=0,1,2,3,\cdots$.

We also define the following Lyapunov function
\begin{eqnarray}
V(x(t))=\sum_{i=1}^{n}\xi_{i}[x_{i}(t)-\bar{x}(t)]^{2}.
\end{eqnarray}

\begin{remark}
Quantity $\bar{x}(t)$ and function $V(x(t))$ were introduced in
\cite{Lu2006} to discuss synchronization.
$\bar{X}(t)=[\bar{x}^{\top}(t),\cdots,\bar{x}^{\top}(t)]^{\top}$ is
the non-orthogonal projection of
$[x_{1}^{\top}(t),\cdots,x_{n}^{\top}(t)]^{\top}$ on the
synchronization manifold ${\mathcal S}=\{[x_{1}^{\top},\cdots,
x_{n}^{\top}]^{\top}\in \mathbb{R}^{nm}:~x_{i}=x_{j},~~
i,j=1,\cdots,n\}$, where
$x_{i}=[x_{i}^{1},\cdots,x_{i}^{m}]^{\top}\in \mathbb{R}^{m}$,
$i=1,\cdots,n$, and $x_{i}^{\top}$ represents the transpose of
$x_{i}$. $V(t)$ is some distance from
$[x_{1}^{\top}(t),\cdots,x_{n}^{\top}(t)]^{\top}$ to the
synchronization manifold ${\mathcal S}$. And synchronization is
equivalent to the distance goes to zero when time $t$ goes to
infinity, i.e.,
\begin{eqnarray}
\lim_{t\rightarrow\infty}V(t)=0.
\end{eqnarray}

\end{remark}

With the two functions $\bar{x}(t)$¡¡and $V(t)$, we will prove the
system with one impulsive controller (\ref{eqnPulse1}) can reach
consensus on $0$ by proving
\begin{eqnarray}
\lim_{t\rightarrow\infty}V(t)=0
\end{eqnarray}
and
\begin{eqnarray}
\lim_{t\rightarrow\infty}\bar{x}(t)=0
\end{eqnarray}
simultaneously.

The following theorem is the main result of this paper.

\begin{theorem}\label{thmMain}
Suppose $-L\in \mathbf{A}_{2}$, or equivalently, the underlying
graph is strongly connected, and there exist
$0<\eta_{1}\le\eta_{2}<1/\xi_{1}$ such that $b_{k}\in
[\eta_{1},\eta_{2}]$ for each $k$. If $\bar{x}(0)\ne 0$, then there
is a constant $T>0$ such that \eqref{eqnPulse1} will reach consensus
on $s$, when $\Delta t_{k}\ge T$ for each $k$.
\end{theorem}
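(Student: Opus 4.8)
The plan is to decouple the dynamics into two parts: the evolution of the ``average'' $\bar x(t)$ along the left-zero-eigenvector direction, and the evolution of the transverse component measured by $V(t)$. First I would analyze $\bar x(t)$. Between impulses, since $\xi^{\top}L=0$, we have $\dot{\bar x}(t)=\xi^{\top}\dot x(t)=-\xi^{\top}Lx(t)=0$, so $\bar x$ is constant on each interval $(t_k,t_{k+1})$. At an impulse, only $x_1$ jumps: $x_1(t_k^{+})=(1-b_k)x_1(t_k^{-})$, hence $\bar x(t_k^{+})=\bar x(t_k^{-})-\xi_1 b_k x_1(t_k^{-})$. Writing $x_1(t_k^{-})=\bar x(t_k^{-})+[x_1(t_k^{-})-\bar x(t_k^{-})]$, we get
\begin{eqnarray*}
\bar x(t_k^{+})=(1-\xi_1 b_k)\bar x(t_k^{-})-\xi_1 b_k\,[x_1(t_k^{-})-\bar x(t_k^{-})].
\end{eqnarray*}
The factor $1-\xi_1 b_k$ lies in $(1-\xi_1\eta_2,\,1-\xi_1\eta_1)\subseteq(0,1)$ by the hypothesis $\eta_2<1/\xi_1$, so the ``clean'' part of the map is a uniform contraction; the coupling term $\xi_1 b_k[x_1-\bar x]$ is a perturbation bounded in terms of $\sqrt{V(t_k^{-})}$ (since $\xi_1[x_1-\bar x]^2\le V$, so $|x_1-\bar x|\le\sqrt{V/\xi_1}$).

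Next I would analyze $V(t)$. The key input is the continuous-time decay supplied by Remark 1 / Lemma 1: with $\Xi=\diag(\xi)$, the symmetric matrix $\Xi L+L^{\top}\Xi$ has eigenvalues $0=\lambda_1>\lambda_2\ge\cdots$, and a standard computation (as in \cite{Lu2006}) gives $\dot V(t)\le\lambda_2 V(t)$ on each interval where no impulse acts, hence $V(t_{k+1}^{-})\le e^{\lambda_2\Delta t_k}V(t_k^{+})$. At an impulse I must bound the jump of $V$. Since $\bar x$ also changes, I would expand $V(t_k^{+})=\sum_i\xi_i[x_i(t_k^{+})-\bar x(t_k^{+})]^2$ using $x_i(t_k^{+})=x_i(t_k^{-})$ for $i\ne 1$, $x_1(t_k^{+})=(1-b_k)x_1(t_k^{-})$, and the formula for $\bar x(t_k^{+})$ above; this yields $V(t_k^{+})\le \rho\, V(t_k^{-})$ for some constant $\rho=\rho(\eta_1,\eta_2,\xi_1,\xi)\ge 1$ depending only on the bounds, not on $k$ (the jump can only increase $V$ by a bounded multiplicative factor because the perturbation of $x_1$ is itself controlled by $\sqrt{V}$). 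Combining, $V(t_{k+1}^{-})\le \rho\, e^{\lambda_2\Delta t_k}V(t_k^{-})$, and choosing $T$ so that $\rho\, e^{\lambda_2 T}=:q<1$ (possible since $\lambda_2<0$), we get $V(t_k^{-})\le q^{k}V(t_0^{-})\to 0$ geometrically; a routine argument then upgrades this to $V(t)\to 0$ uniformly on the whole time axis (on each interval $V$ only decays, and on the next interval it starts from $\rho q^k V(0)$).

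Finally I would close the loop on $\bar x$. From the recursion $\bar x(t_{k+1}^{-})=(1-\xi_1 b_k)\bar x(t_k^{-})-\xi_1 b_k[x_1(t_k^{-})-\bar x(t_k^{-})]$ with $|1-\xi_1 b_k|\le 1-\xi_1\eta_1=:\mu<1$ and $|x_1(t_k^{-})-\bar x(t_k^{-})|\le\sqrt{V(t_k^{-})/\xi_1}\le\sqrt{q^k V(0)/\xi_1}$, I have a linear scalar recursion $|\bar x(t_{k+1}^{-})|\le\mu|\bar x(t_k^{-})|+C\sqrt{q}^{\,k}$; since both $\mu<1$ and $\sqrt q<1$, standard estimates (discrete variation-of-constants / comparison) give $\bar x(t_k^{-})\to 0$, and again since $\bar x$ is piecewise constant this gives $\bar x(t)\to 0$. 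Together with $V(t)\to 0$ and the elementary bound $|x_i(t)-\bar x(t)|\le\sqrt{V(t)/\xi_i}$ (using $\xi_i>0$ from Lemma 1 item 4, which holds because $-L\in\mathbf A_2$), we conclude $x_i(t)\to 0$ for every $i$, i.e.\ consensus on $s=0$.

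I expect the main obstacle to be the impulse estimate for $V$: unlike the usual pinning-synchronization setting where the pinned state is driven toward a fixed reference and $V$ decreases at impulses, here the impulse on a single coordinate $x_1$ perturbs the projection $\bar x$ as well, so $V$ can momentarily grow; the crucial point is that this growth is by a factor $\rho$ that is uniformly bounded (independent of $k$ and of the trajectory), which is exactly what lets the continuous-time contraction $e^{\lambda_2 T}$ dominate once $T$ is large enough. Making the constant $\rho$ explicit and verifying it does not blow up as $b_k\to 1/\xi_1$ — i.e.\ checking the algebra that the perturbation terms are genuinely $O(\sqrt V)$ and combine into a bounded quadratic form — is the technical heart of the argument. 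The interplay $\eta_2<1/\xi_1$ is what keeps $1-\xi_1 b_k>0$ and is therefore essential in both halves of the proof.
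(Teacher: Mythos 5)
Your treatment of $\bar x$ (piecewise constant, jump relation $\bar x(t_k^{+})=(1-\xi_1 b_k)\bar x(t_k^{-})-\xi_1 b_k[x_1(t_k^{-})-\bar x(t_k^{-})]$) and of the inter-impulse decay of $V$ matches the paper. The gap is at the step you yourself call the technical heart: the claimed uniform multiplicative jump bound $V(t_k^{+})\le \rho\,V(t_k^{-})$ is false, because the perturbation of $x_1$ at an impulse is $-b_k x_1(t_k^{-})$, and $x_1(t_k^{-})=\bar x(t_k^{-})+[x_1(t_k^{-})-\bar x(t_k^{-})]$ contains the projection $\bar x(t_k^{-})$, which is \emph{not} controlled by $\sqrt{V(t_k^{-})}$. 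Concretely, if the state just before an impulse is exactly at consensus on a value $c\ne 0$, then $V(t_k^{-})=0$ while a direct computation gives $V(t_k^{+})=\xi_1(1-\xi_1)b_k^2c^2>0$, so no finite $\rho$ works. The correct jump estimate is of the form $V(t_k^{+})\le [2+4\eta_2^2(1-\xi_1)]V(t_k^{-})+4\eta_2^2\xi_1(1-\xi_1)\bar x^2(t_k^{-})$, i.e.\ multiplicative in $V$ \emph{plus} an additive term in $\bar x^2$; consequently your chain $V(t_{k+1}^{-})\le\rho e^{\lambda_2\Delta t_k}V(t_k^{-})\le q^kV(0)$ collapses, and with it the variation-of-constants argument for $\bar x$.

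This is exactly the obstruction the paper flags (its remark that proving $V(t_k^{+})\le C V(t_k^{-})$ "directly" is not feasible) and circumvents differently: instead of a $V$-only recursion, it tracks the \emph{ratio} $V/\bar x^2$. Its Lemma~3 shows that if $\Delta t_k$ is large enough relative to the current ratio, then $|\bar x(t_{k+1}^{+})|\le[1-\eta_1(\xi_1-\epsilon)]|\bar x(t_k^{+})|$ while the ratio stays below a fixed constant $C$, and the Corollary then yields a single $T$ (depending on $V(0)/\bar x^2(0)$) that sustains both estimates for all $k$; consensus follows from $\bar x(t_k^{+})\to 0$ and $V(t_k^{+})\le C\bar x^2(t_k^{+})$. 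Note that this is why the hypothesis $\bar x(0)\ne 0$ appears in the theorem — your argument never uses it, which is a signal that the coupling between the $\bar x^2$ forcing term in the $V$-jump and the $\sqrt V$ perturbation in the $\bar x$-recursion has been lost. To salvage your route you would have to treat $(\sqrt{V},|\bar x|)$ as a coupled pair (a two-dimensional linear recursion whose matrix has spectral radius less than one when $e^{-\lambda_2 T/(2\max_i\xi_i)}$ is small), or adopt the paper's ratio argument; the purely multiplicative bound on $V$ alone cannot be repaired.
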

\begin{remark}
It is interesting to note that the permissible range of the
impulsive strength is dependent on $\xi_{1}$ and decreasing with
$\xi_{1}$. Since in a strongly connected graph, $\xi_{1}<1$, we can
always choose $\eta_{2}>1$. Actually, in a network of $n$ nodes,
$\min_{i}\xi_{i}\le 1/n$. So, by properly choosing the pinning node,
we can always let $\eta_{2}>n$ except for the case $\xi_{i}=1/n$ for
each $i$, in which $\eta_{2}<n$ but can be arbitrarily close to $n$.
\end{remark}

The proof of Theorem \ref{thmMain} is divided into several steps.
First, we prove

\begin{lemma}\label{lemConvergenceV}
If $-L\in {\mathbf A}_{2}$, then
\begin{eqnarray}\label{Lemma2}
V(t_{k}^{-})\le
V(t_{k-1}^{+})e^{\frac{-\lambda_{2}}{\max_{i}\{\xi_{i}\}}\Delta
t_{k}},
\end{eqnarray}
where $\lambda_{2}>0$ is the smallest positive eigenvalue of the
symmetric matrix $\Xi L+L^{\top}\Xi$.
\end{lemma}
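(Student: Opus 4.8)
The plan is to analyze the flow of the uncontrolled system $\dot x(t) = -Lx(t)$ on the interval $(t_{k-1}, t_k)$ and show that the Lyapunov function $V(x(t))$ decays exponentially at the stated rate. The key observation is that $\bar x(t) = \sum_i \xi_i x_i(t)$ is conserved along the flow, since $\xi^\top L = 0$ gives $\frac{d}{dt}\bar x(t) = -\xi^\top L x(t) = 0$; hence between impulses $\bar x$ is constant and $V$ measures exactly the deviation from this fixed projection. First I would compute $\frac{d}{dt}V(x(t))$ along trajectories of $\dot x = -Lx$. Writing $e(t) = x(t) - \bar x(t)\mathbf{1}$, so that $V = e^\top \Xi e$ and $\xi^\top e = 0$, and using that $L\mathbf{1} = 0$ so $\dot e = -Le$, we get
\begin{eqnarray}
\frac{d}{dt}V(x(t)) = -e(t)^\top(\Xi L + L^\top \Xi)e(t).
\end{eqnarray}

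Next I would invoke the Remark following Lemma~\ref{lemMatrix}: since $-L \in \mathbf{A}_2$, the symmetric matrix $\Xi L + L^\top \Xi$ has eigenvalue $0$ (with eigenvector $\mathbf{1}$) and all other eigenvalues $\lambda_2 \ge \cdots \ge \lambda_n$ satisfying $0 = \lambda_1 > \lambda_2$, where here $\lambda_2$ denotes the smallest \emph{positive} eigenvalue of $\Xi L + L^\top\Xi$ in the sign convention of the lemma (equivalently, the spectral gap). Because $e(t)$ stays in the subspace $\{\xi^\top e = 0\}$, and $\mathbf{1}$ is \emph{not} in general $\Xi$-orthogonal to that subspace, the cleanest route is a change of variables: set $w = \Xi^{1/2} e$, so $V = \|w\|^2$ and $\frac{d}{dt}V = -w^\top M w$ with $M = \Xi^{1/2}(\Xi L + L^\top\Xi)\Xi^{-1/2}$ similar to $\Xi^{-1}(\Xi L + L^\top \Xi)$... actually the more direct estimate is to note $e^\top(\Xi L + L^\top\Xi)e \ge \lambda_2 \|e\|^2 \ge \frac{\lambda_2}{\max_i \xi_i} e^\top \Xi e = \frac{\lambda_2}{\max_i\xi_i} V$, using that $e$ is orthogonal (in the standard inner product, up to the projection argument) to the null direction. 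The careful point is justifying $e^\top(\Xi L + L^\top\Xi)e \ge \lambda_2\|e\|^2$ given only $\xi^\top e = 0$ rather than $\mathbf{1}^\top e = 0$; this follows because one can replace $e$ by $e - c\mathbf{1}$ for a suitable scalar $c$ chosen so the shifted vector is genuinely orthogonal to $\mathbf{1}$, and the quadratic form is unchanged under adding multiples of $\mathbf{1}$ (as $(\Xi L + L^\top\Xi)\mathbf{1} = 0$), while $\|e - c\mathbf{1}\| \ge$ something — actually the projection $e$ as defined already has the property that shifting by $\mathbf 1$ only increases $\|\cdot\|$ when... this is the step to handle with care.

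Granting the differential inequality $\frac{d}{dt}V(x(t)) \le -\frac{\lambda_2}{\max_i\xi_i}V(x(t))$ on each open interval $(t_{k-1}, t_k)$, Grönwall's inequality (or direct integration) yields $V(t_k^-) \le V(t_{k-1}^+)\,e^{-\frac{\lambda_2}{\max_i\xi_i}\Delta t_{k-1}}$, which is exactly \eqref{Lemma2} after reindexing $\Delta t_k = t_{k+1} - t_k$ as in the paper's convention. The main obstacle I anticipate is the coercivity estimate in the middle paragraph: one must pass from the fact that $\Xi L + L^\top\Xi$ is only positive semidefinite with a one-dimensional kernel along $\mathbf{1}$ to a genuine exponential contraction of $V$, and this requires correctly relating the constraint subspace $\{\xi^\top e = 0\}$ carried by the dynamics to the eigenspace decomposition of the symmetric form, then bounding $e^\top\Xi e \le (\max_i\xi_i)\|e\|^2$ to convert the spectral bound on $\|e\|^2$ into a bound on $V$. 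Everything else is routine integration of a scalar linear differential inequality.
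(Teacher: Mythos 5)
Your overall route is the same as the paper's: differentiate $V$ along the uncontrolled flow, use $\dot{\bar x}(t)=0$ and the zero row sums of $L$ to get $\dot V(t)=-\delta x(t)^{\top}[\Xi L+L^{\top}\Xi]\delta x(t)$ with $\delta x=x-\bar x\mathbf{1}$, then invoke a spectral bound and integrate. The step you explicitly leave unfinished — the coercivity estimate — is, however, a genuine gap as written. The bound $\delta x^{\top}(\Xi L+L^{\top}\Xi)\delta x\ge\lambda_{2}\|\delta x\|^{2}$ is only guaranteed for vectors orthogonal (in the standard inner product) to the kernel direction $\mathbf{1}$, whereas the dynamics only give you $\xi^{\top}\delta x=0$; when $\xi$ is not uniform these are different hyperplanes, and on $\{v:\xi^{\top}v=0\}$ the Rayleigh quotient of $\Xi L+L^{\top}\Xi$ can drop strictly below $\lambda_{2}$ (take $v=u_{2}+c\mathbf{1}$ with $u_{2}$ a unit $\lambda_{2}$-eigenvector and $c=-\xi^{\top}u_{2}\ne 0$). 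So you are not yet entitled to ``granting the differential inequality.'' To be fair, the paper's own proof writes exactly the chain $\dot V\le-\lambda_{2}\|\delta x\|^{2}\le-\frac{\lambda_{2}}{\max_{i}\{\xi_{i}\}}V$ without addressing this orthogonality mismatch, so you have correctly located the only delicate point of the lemma — but you have not resolved it.

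The shift idea you start and then abandon does close the gap; the trick is to compare against $V$ directly rather than against $\|\delta x\|^{2}$. Since $(\Xi L+L^{\top}\Xi)\mathbf{1}=0$, for any scalar $c$ one has $\delta x^{\top}(\Xi L+L^{\top}\Xi)\delta x=(\delta x-c\mathbf{1})^{\top}(\Xi L+L^{\top}\Xi)(\delta x-c\mathbf{1})$. Choosing $c=\mathbf{1}^{\top}\delta x/n$ makes $\delta x-c\mathbf{1}$ orthogonal to $\mathbf{1}$, so the spectral bound applies and gives $\ge\lambda_{2}\|\delta x-c\mathbf{1}\|^{2}\ge\frac{\lambda_{2}}{\max_{i}\{\xi_{i}\}}\sum_{i}\xi_{i}(\delta x_{i}-c)^{2}$. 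Finally, because $\sum_{i}\xi_{i}\delta x_{i}=0$ and $\sum_{i}\xi_{i}=1$, one has $\sum_{i}\xi_{i}(\delta x_{i}-c)^{2}=V+c^{2}\ge V$. Chaining these three inequalities yields $\dot V\le-\frac{\lambda_{2}}{\max_{i}\{\xi_{i}\}}V$ on each interimpulse interval, after which your integration step and the reindexing of $\Delta t_{k}$ are routine and correct. With this insertion your argument is complete (and in fact supplies the justification the paper's proof asserts implicitly).
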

\begin{proof} Denote $\delta
x(t)=[x_{1}(t)-\bar{x}(t),\cdots,x_{n}(t)-\bar{x}(t)]^{\top}$. Then
\begin{align*}
\dot{V}(t)&=-2\sum_{i=1}^{n}\xi_{i}[x_{i}(t)-\bar{x}(t)]\big[\sum_{j=1}^{n}l_{ij}x_{j}(t)\big]\\
&=-2\sum_{i=1}^{n}\sum_{j=1}^{n}\xi_{i}l_{ij}[x_{i}(t)-\bar{x}(t)][x_{j}(t)-\bar{x}(t)]\\
&=-\delta x(t)^{\top}[\Xi L+L^{\top}\Xi]\delta x(t)\\
&\le-\lambda_{2}\|\delta x(t)\|^{2}\\
&\le\frac{-\lambda_{2}}{\max_{i}\{\xi_{i}\}}V(t).
\end{align*}
This implies \eqref{Lemma2}.
\end{proof}

\begin{remark}
By routine approach, it is desired to prove $V(t_{k}^{+})\le C
V(t_{k}^{-})$ for some constant $C$. Unfortunately, it is difficult
to prove it directly. Instead, we prove following Lemma.
\end{remark}

\begin{lemma}\label{lemMainEstimate}
Let $\epsilon,~\eta_{1},~\eta_{2}$ be constants satisfying
$0<\eta_{1}\le\eta_{2}<1/\xi_{1}$,
$0<\epsilon<\min\{\xi_{1},1/\eta_{2}-\xi_{1}\}$, the impulsive
strength $b_{k}\in[\eta_{1},\eta_{2}]$ for each $k$, $x(t)$ is a
solution of the system \eqref{eqnPulse1}. If
\begin{eqnarray}
\Delta t_{k}\ge\frac{\max_{i}\{\xi_{i}\}}{\lambda_{2}}\ln
\bigg(\frac{\xi_{1}}{\epsilon^{2}}
\frac{V(t_{k}^{+})}{\bar{x}^{2}(t_{k}^{+})}\bigg)\label{T_{k}},~
k=0,1,2,\cdots,
\end{eqnarray}
then, we have
\begin{eqnarray}\label{ineqJumpBar1}
|\bar{x}(t_{k+1}^{+})|\le
[1-\eta_{1}(\xi_{1}-\epsilon)]|\bar{x}(t_{k}^{+})|
\end{eqnarray}
and
\begin{eqnarray}\label{ineqCkEstimate}
\frac{V(t_{k+1}^{+})}{\bar{x}^{2}(t_{k+1}^{+})}\le
\frac{[2+4\eta_{2}^{2}(1-\xi_{1})]\epsilon^{2}/\xi_{1}+4\eta_{2}^{2}\xi_{1}(1-\xi_{1})}{[1-\eta_{2}(\xi_{1}+\epsilon)]^{2}}
\end{eqnarray}
for $k=0,1,2,\cdots$.
\end{lemma}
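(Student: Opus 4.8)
The strategy is to analyze what happens to the pair $(\bar x, V)$ across one full cycle $[t_k^+, t_{k+1}^+]$. The cycle consists of two phases: a \emph{flow phase} on $(t_k, t_{k+1})$ governed by $\dot x = -Lx$, during which $\bar x$ is invariant (since $\xi^\top L = 0$) and $V$ decays according to Lemma \ref{lemConvergenceV}; and an \emph{impulse phase} at $t_{k+1}$, where only coordinate $1$ is reset to $(1-b_{k+1})x_1$. So first I would record the two elementary effects of the impulse. For $\bar x$: since $\bar x(t_{k+1}^+) - \bar x(t_{k+1}^-) = \xi_1\,\Delta x_1(t_{k+1}) = -b_{k+1}\xi_1 x_1(t_{k+1}^-)$, and $x_1(t_{k+1}^-) = \bar x(t_{k+1}^-) + \delta x_1(t_{k+1}^-)$, we get
\[
\bar x(t_{k+1}^+) = (1 - b_{k+1}\xi_1)\,\bar x(t_{k+1}^-) - b_{k+1}\xi_1\,\delta x_1(t_{k+1}^-).
\]
The term $|\delta x_1(t_{k+1}^-)|$ is controlled by $\sqrt{V(t_{k+1}^-)/\xi_1}$, which in turn is bounded via Lemma \ref{lemConvergenceV} by $\sqrt{V(t_k^+)/\xi_1}\,e^{-\lambda_2\Delta t_k/(2\max_i\xi_i)}$. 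The hypothesis \eqref{T_{k}} on $\Delta t_k$ is precisely calibrated so that this exponential factor forces $|\delta x_1(t_{k+1}^-)| \le \epsilon\,|\bar x(t_k^+)| = \epsilon\,|\bar x(t_{k+1}^-)|$ (using invariance of $\bar x$ during the flow). Plugging this in and using $b_{k+1}\in[\eta_1,\eta_2]$ with $\eta_2(\xi_1+\epsilon)<1$ yields $|\bar x(t_{k+1}^+)| \le (1 - \eta_1\xi_1 + \eta_1\epsilon)|\bar x(t_{k+1}^-)|$, which is \eqref{ineqJumpBar1}; one also gets the lower bound $|\bar x(t_{k+1}^+)| \ge (1 - \eta_2(\xi_1+\epsilon))|\bar x(t_{k+1}^-)| > 0$, which is what keeps the ratio $V/\bar x^2$ finite.

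Next I would estimate $V(t_{k+1}^+)$. The impulse changes $x_1$ by $-b_{k+1}x_1(t_{k+1}^-)$ and also changes $\bar x$ by the amount computed above, so I would expand $V(t_{k+1}^+) = \sum_i \xi_i[x_i(t_{k+1}^+) - \bar x(t_{k+1}^+)]^2$ by writing each new deviation as the old deviation plus correction terms coming from (a) the jump in $x_1$ (affects only $i=1$) and (b) the jump in $\bar x$ (affects all $i$). Using $(a+b)^2 \le 2a^2 + 2b^2$ repeatedly, $V(t_{k+1}^+)$ splits into a multiple of $V(t_{k+1}^-)$, a multiple of $\xi_1\,\delta x_1(t_{k+1}^-)^2$, and a multiple of $\xi_1\,\bar x(t_{k+1}^-)^2$ (coming from the $b_{k+1}\xi_1\bar x$ piece of the jump); the weights involve $b_{k+1}^2 \le \eta_2^2$ and $\sum_i\xi_i = 1$, $\sum_{i\ne 1}\xi_i = 1-\xi_1$. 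Then I substitute $V(t_{k+1}^-) \le \epsilon^2\,\xi_1\,\bar x(t_{k+1}^-)^2$ (again from \eqref{T_{k}}) and $\delta x_1(t_{k+1}^-)^2 \le \epsilon^2\,\bar x(t_{k+1}^-)^2$, so that every surviving term is proportional to $\bar x(t_{k+1}^-)^2$. Dividing by $\bar x(t_{k+1}^+)^2 \ge (1-\eta_2(\xi_1+\epsilon))^2\,\bar x(t_{k+1}^-)^2$ cancels the $\bar x(t_{k+1}^-)^2$ and produces exactly the right-hand side of \eqref{ineqCkEstimate}, with the numerator $[2 + 4\eta_2^2(1-\xi_1)]\epsilon^2/\xi_1 + 4\eta_2^2\xi_1(1-\xi_1)$ assembling the three contributions.

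The main obstacle is purely the bookkeeping of constants in the second step: one must expand $V(t_{k+1}^+)$ carefully, decide how to distribute the $2ab \le a^2+b^2$ splits so that the final coefficients come out exactly as stated (rather than merely of the same order), and be sure that the $i=1$ term and the $i\ne 1$ terms are grouped consistently with $\sum_{i\ne1}\xi_i = 1-\xi_1$. There is no conceptual difficulty — the flow estimate is Lemma \ref{lemConvergenceV}, the impulse is a rank-one update, and the length condition \eqref{T_{k}} is exactly the threshold making $V(t_{k+1}^-)/\bar x^2(t_{k+1}^-) \le \epsilon^2\xi_1$ — but matching the algebra to the displayed bound requires care. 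I would also explicitly note that the constraints $0<\epsilon<\min\{\xi_1,\,1/\eta_2-\xi_1\}$ guarantee both $\epsilon<\xi_1$ (needed so $1-\eta_1(\xi_1-\epsilon)$ is a genuine contraction factor in $(0,1)$) and $\eta_2(\xi_1+\epsilon)<1$ (needed for the denominator in \eqref{ineqCkEstimate} to be positive and for the lower bound on $|\bar x(t_{k+1}^+)|$).
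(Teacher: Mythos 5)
Your overall route is exactly the paper's: use Lemma \ref{lemConvergenceV} together with the dwell-time condition \eqref{T_{k}} to make the deviation of $x_{1}$ from $\bar{x}$ small relative to $|\bar{x}|$ at $t_{k+1}^{-}$, treat the impulse as a rank-one update, obtain two-sided bounds for the jump of $\bar{x}$, expand the jump of $V$ with $(a+b)^{2}\le 2(a^{2}+b^{2})$, and divide by the lower bound on $\bar{x}^{2}(t_{k+1}^{+})$. However, there is a concrete quantitative slip precisely in the place you flag as the crux (the bookkeeping of constants): condition \eqref{T_{k}} does \emph{not} give $V(t_{k+1}^{-})/\bar{x}^{2}(t_{k+1}^{-})\le \epsilon^{2}\xi_{1}$, nor $|x_{1}(t_{k+1}^{-})-\bar{x}(t_{k+1}^{-})|\le \epsilon\,|\bar{x}(t_{k+1}^{-})|$. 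Since $\Delta t_{k}\ge\frac{\max_{i}\{\xi_{i}\}}{\lambda_{2}}\ln\big(\frac{\xi_{1}}{\epsilon^{2}}\frac{V(t_{k}^{+})}{\bar{x}^{2}(t_{k}^{+})}\big)$, Lemma \ref{lemConvergenceV} yields $V(t_{k+1}^{-})\le \frac{\epsilon^{2}}{\xi_{1}}\bar{x}^{2}(t_{k+1}^{-})$ and hence $|x_{1}(t_{k+1}^{-})-\bar{x}(t_{k+1}^{-})|\le\sqrt{V(t_{k+1}^{-})/\xi_{1}}\le\frac{\epsilon}{\xi_{1}}|\bar{x}(t_{k+1}^{-})|$; your claimed thresholds are stronger by factors $\xi_{1}^{2}$ and $\xi_{1}$ respectively and are not justified by the hypothesis.

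This miscalibration is not cosmetic, because the stated constants depend on it. If you literally carry out your substitutions ($V(t_{k+1}^{-})\le\epsilon^{2}\xi_{1}\bar{x}^{2}$ and $[x_{1}-\bar{x}]^{2}\le\epsilon^{2}\bar{x}^{2}$), the numerator you obtain is $[2+4\eta_{2}^{2}(1-\xi_{1})]\epsilon^{2}\xi_{1}+4\eta_{2}^{2}\xi_{1}(1-\xi_{1})$, which is not the right-hand side of \eqref{ineqCkEstimate} and, more importantly, rests on unsupported intermediate bounds. With the correct calibration everything lands exactly as stated: in the jump identity $\bar{x}(t_{k+1}^{+})=(1-b_{k+1}\xi_{1})\bar{x}(t_{k+1}^{-})-b_{k+1}\xi_{1}[x_{1}(t_{k+1}^{-})-\bar{x}(t_{k+1}^{-})]$ the weight $\xi_{1}$ cancels the $1/\xi_{1}$ in the deviation bound, giving the factors $1-b_{k+1}(\xi_{1}-\epsilon)$ (upper) and $1-b_{k+1}(\xi_{1}+\epsilon)$ (lower), whence \eqref{ineqJumpBar1} and the positive denominator (here $\epsilon<\xi_{1}$ and $\epsilon<1/\eta_{2}-\xi_{1}$ are used); and in the $V$-expansion one should bound $\xi_{1}[x_{1}(t_{k+1}^{-})-\bar{x}(t_{k+1}^{-})]^{2}\le V(t_{k+1}^{-})$ first and only afterwards use $V(t_{k+1}^{-})\le\frac{\epsilon^{2}}{\xi_{1}}\bar{x}^{2}(t_{k+1}^{-})$, which produces exactly the numerator $[2+4\eta_{2}^{2}(1-\xi_{1})]\epsilon^{2}/\xi_{1}+4\eta_{2}^{2}\xi_{1}(1-\xi_{1})$. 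So the plan coincides with the paper's proof, but as written its key intermediate inequalities must be recalibrated as above before the algebra matches the claimed bounds.
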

\begin{proof}
First, by (\ref{Lemma2}), we have
\begin{eqnarray}\label{21}
V(t_{k+1}^{-})\le
V(t_{k}^{+})e^{\frac{-\lambda_{2}}{\max_{i}\{\xi_{i}\}} \Delta
t_{k}},
\end{eqnarray}
which implies 
\begin{align}
|x_{1}(t_{k+1}^{-})-\bar{x}(t_{k+1}^{-})|\le
\sqrt{V(t_{k+1}^{-})/\xi_{1}}\le
\frac{\epsilon}{\xi_{1}}|\bar{x}(t_{k+1}^{-})|.
\end{align}
By \eqref{eqnPulse1}, we have
\begin{align}
\bar{x}(t_{k+1}^{+})
&=\bar{x}(t_{k+1}^{-})-b_{k+1}\xi_{1}x_{1}(t_{k+1}^{-})\nonumber\\
&=(1-b_{k+1}\xi_{1})\bar{x}(t_{k+1}^{-})\nonumber\\&
+b_{k+1}\xi_{1}[\bar{x}(t_{k+1}^{-})-x_{1}(t_{k+1}^{-})].
\end{align}
Thus, for $k=0,1,2,\cdots$,
\begin{eqnarray}\label{ineqJumpBar}
\left\{
\begin{array}{ll}
&|\bar{x}(t_{k+1}^{+})|
\ge [1-b_{k+1}(\xi_{1}+\epsilon)]|\bar{x}(t_{k+1}^{-})|\\
&|\bar{x}(t_{k+1}^{+})| \le
[1-b_{k+1}(\xi_{1}-\epsilon)]|\bar{x}(t_{k+1}^{-})|\end{array}\right.,
\end{eqnarray}
which implies
\begin{eqnarray}\label{ineqXiEsatimate1}
\left\{
\begin{array}{ll}
&|\bar{x}(t_{k+1}^{+})|
\ge [1-\eta_{2}(\xi_{1}+\epsilon)]|\bar{x}(t_{k+1}^{-})|\\
&|\bar{x}(t_{k+1}^{+})| \le
[1-\eta_{1}(\xi_{1}-\epsilon)]|\bar{x}(t_{k+1}^{-})|\end{array}\right..
\end{eqnarray}

Noting $\bar{x}(t_{k}^{+})=\bar{x}(t_{k+1}^{-})$, we have
\begin{align}\label{ineqXiEsatimate}
|\bar{x}(t_{k+1}^{+})| \le
[1-\eta_{1}(\xi_{1}-\epsilon)]|\bar{x}(t_{k}^{+})|,
\end{align}
which is just the inequality \eqref{ineqJumpBar1}.

On the other hand, noting the fact that
$\bar{x}^{2}(t_{k+1}^{-})=\bar{x}^{2}(t_{k}^{+})$ and (\ref{T_{k}}),
we have
\begin{align}
\frac{V(t_{k+1}^{-})}{\bar{x}^{2}(t_{k+1}^{-})}& \le
\frac{V(t_{k}^{+})e^{\frac{-\lambda_{2}}{\max_{i}\{\xi_{i}\}} \Delta
t_{k}}}{\bar{x}^{2}(t_{k}^{+})}=\frac{\epsilon^{2}}{\xi_{1}}\label{lemma3a}
\end{align}
Furthermore, by the assumption $\sum_{j=1}^{n}\xi_{j}=1$ and
inequality $(a+b)^{2}\le 2(a^{2}+b^{2})$, we have
\begin{align}\label{23}
&V(t_{k+1}^{+})=\xi_{1}[x_{1}(t_{k+1}^{+})-\bar{x}(t_{k+1}^{+})]^{2}\nonumber\\
&
+\sum_{i=2}^{n}\xi_{i}[x_{i}(t_{k+1}^{+})-\bar{x}(t_{k+1}^{+})]^{2}\nonumber\\
&=\xi_{1}[x_{1}(t_{k+1}^{-})-\bar{x}(t_{k+1}^{-})-b_{k+1}(1-\xi_{1})x_{1}(t_{k+1}^{-})]^{2}\nonumber\\
&
+\sum_{i=2}^{n}\xi_{i}[x_{i}(t_{k+1}^{-})-\bar{x}(t_{k+1}^{-})+b_{k+1}\xi_{1}x_{1}(t_{k+1}^{-})]^{2}\nonumber\\
&\le
2\big\{\sum_{i=1}^{n}\xi_{i}[x_{i}(t_{k+1}^{-})-\bar{x}(t_{k+1}^{-})]^{2}\nonumber\\
&
+b_{k+1}^{2}\xi_{1}(1-\xi_{1})^{2}x_{1}^{2}(t_{k+1}^{-})
+b_{k+1}^{2}\xi_{1}^{2}\sum_{i=2}^{n}\xi_{i}x_{1}^{2}(t_{k+1}^{-})\big\}\nonumber\\
&=2V(t_{k+1}^{-})+2b_{k+1}^{2}\xi_{1}(1-\xi_{1})x_{1}^{2}(t_{k+1}^{-})\nonumber\\
&\le
2V(t_{k+1}^{-})+2\eta_{2}^{2}\xi_{1}(1-\xi_{1})x_{1}^{2}(t_{k+1}^{-}).
\end{align}

By \eqref{ineqXiEsatimate1} and \eqref{23}, we have
\begin{align}
&\frac{V(t_{k+1}^{+})}{\bar{x}^{2}(t_{k+1}^{+})}
\le\frac{2V(t_{k+1}^{-})+2\eta_{2}^{2}\xi_{1}(1-\xi_{1})x_{1}^{2}(t_{k+1}^{-})}
{[1-\eta_{2}(\xi_{1}+\epsilon)]^{2}\bar{x}^{2}(t_{k+1}^{-})}\nonumber\\
&\le\frac{2V(t_{k+1}^{-})}{[1-\eta_{2}(\xi_{1}+\epsilon)]^{2}\bar{x}^{2}(t_{k+1}^{-})}\nonumber\\
&
+\frac{4\eta_{2}^{2}\xi_{1}(1-\xi_{1})\{[x_{1}(t_{k+1}^{-})-\bar{x}(t_{k+1}^{-})]^{2}
+\bar{x}^{2}(t_{k+1}^{-})\}}{[1-\eta_{2}(\xi_{1}+\epsilon)]^{2}\bar{x}^{2}(t_{k+1}^{-})}\nonumber\\
&\le\frac{[2+4\eta_{2}^{2}(1-\xi_{1})]V(t_{k+1}^{-})+4\eta_{2}^{2}\xi_{1}(1-\xi_{1})\bar{x}^{2}(t_{k+1}^{-})}
{[1-\eta_{2}(\xi_{1}+\epsilon)]^{2}\bar{x}^{2}(t_{k+1}^{-})}\nonumber\\
&\le\frac{[2+4\eta_{2}^{2}(1-\xi_{1})]\epsilon^{2}/\xi_{1}+4\eta_{2}^{2}
\xi_{1}(1-\xi_{1})}{[1-\eta_{2}(\xi_{1}+\epsilon)]^{2}}.
\end{align}
This proves \eqref{ineqCkEstimate}.
\end{proof}

From Lemma \ref{lemMainEstimate}, we can directly have the following
corollary.
\begin{corollary}\label{corMainEstimate}
Let $\epsilon,~\eta_{1},~\eta_{2}$ be constants satisfying
$0<\eta_{1}\le\eta_{2}<1/\xi_{1}$,
$0<\epsilon<\min\{\xi_{1},1/\eta_{2}-\xi_{1}\}$,
\begin{eqnarray}
C=\frac{[2+4\eta_{2}^{2}(1-\xi_{1})]\epsilon^{2}/\xi_{1}
+4\eta_{2}^{2}\xi_{1}(1-\xi_{1})}{[1-\eta_{2}(\xi_{1}+\epsilon)]^{2}},
\end{eqnarray}
For any given initial value $\bar{x}(0)\ne 0$, let
\begin{eqnarray*}
T=\frac{\max_{i}\{\xi_{i}\}}{\lambda_{2}}\bigg[\max\{\ln C,\ln
[V(0)/\bar{x}^{2}(0)]\}+\ln \frac{\xi_{1}}{\epsilon^{2}}\bigg],
\end{eqnarray*}
and $\Delta t_{k}\ge T$ for each $k$, then
$$|\bar{x}(t_{k}^{+})|\le
[1-\eta_{1}(\xi_{1}-\epsilon)]|\bar{x}(t_{k-1}^{+})|,~k=1,2,3,\cdots.$$
\end{corollary}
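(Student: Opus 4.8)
The plan is to derive the corollary from Lemma~\ref{lemMainEstimate} by a short induction on $k$ whose only job is to verify that the fixed choice $\Delta t_k\ge T$ already guarantees the step-dependent hypothesis \eqref{T_{k}} at \emph{every} $k$. The mechanism is self-improving: the quantity entering \eqref{T_{k}}, namely $\xi_{1}V(t_k^+)/(\epsilon^{2}\bar{x}^{2}(t_k^+))$, is bounded by the initial ratio for $k=0$, and once one impulse interval of length $\ge T$ has elapsed it is bounded by the universal constant $C$ of \eqref{ineqCkEstimate} for every $k\ge 1$; the $\max$ in the definition of $T$ is precisely what packages both situations into a single number.

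First I would check that all the ratios below are well defined, i.e. $\bar{x}(t_k^+)\ne 0$ for every $k$. Since $\bar{x}$ is conserved along the flow $\dot{x}=-Lx$ between impulses, one has $\bar{x}(t_{k}^-)=\bar{x}(t_{k-1}^+)$, and the lower estimate in \eqref{ineqXiEsatimate1} — valid because $\epsilon<1/\eta_{2}-\xi_{1}$ forces $1-\eta_{2}(\xi_{1}+\epsilon)>0$ — gives $|\bar{x}(t_k^+)|\ge[1-\eta_{2}(\xi_{1}+\epsilon)]\,|\bar{x}(t_{k-1}^+)|$; starting from $\bar{x}(0)\ne 0$ this propagates non-vanishing to all $k$. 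Here I read the initial data $\bar{x}(0),V(0)$ as the post-jump values $\bar{x}(t_0^+),V(t_0^+)$ (equivalently, I start the analysis at $t_0^+$), so that the definition of $T$ already gives $\Delta t_0\ge T\ge\frac{\max_{i}\{\xi_{i}\}}{\lambda_{2}}\big(\ln[V(t_0^+)/\bar{x}^{2}(t_0^+)]+\ln(\xi_{1}/\epsilon^{2})\big)$, which is exactly \eqref{T_{k}} for $k=0$.

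The induction then reads as follows. \emph{Base step:} since \eqref{T_{k}} holds at $k=0$, Lemma~\ref{lemMainEstimate} applies and produces both $|\bar{x}(t_1^+)|\le[1-\eta_{1}(\xi_{1}-\epsilon)]|\bar{x}(t_0^+)|$ (the assertion at $k=1$) and $V(t_1^+)/\bar{x}^{2}(t_1^+)\le C$. \emph{Inductive step:} assume $V(t_k^+)/\bar{x}^{2}(t_k^+)\le C$ for some $k\ge1$; then, using the other branch of the maximum, $\Delta t_k\ge T\ge\frac{\max_{i}\{\xi_{i}\}}{\lambda_{2}}\big(\ln C+\ln(\xi_{1}/\epsilon^{2})\big)\ge\frac{\max_{i}\{\xi_{i}\}}{\lambda_{2}}\ln\big(\tfrac{\xi_{1}}{\epsilon^{2}}\tfrac{V(t_k^+)}{\bar{x}^{2}(t_k^+)}\big)$, so \eqref{T_{k}} holds at step $k$. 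Applying Lemma~\ref{lemMainEstimate} once more yields $|\bar{x}(t_{k+1}^+)|\le[1-\eta_{1}(\xi_{1}-\epsilon)]|\bar{x}(t_k^+)|$ together with $V(t_{k+1}^+)/\bar{x}^{2}(t_{k+1}^+)\le C$, which closes the induction. Collecting the contraction inequalities over all steps and reindexing $k\mapsto k-1$ gives the statement of the corollary.

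I do not expect any real obstacle here: all the substance is already contained in Lemma~\ref{lemMainEstimate}. The only two points that need a little care are the ones flagged above — interpreting the ``initial'' ratio at $t_0^+$ rather than $t_0^-$ (the impulse at $t_0$ may enlarge the ratio, so $V(t_0^-)/\bar{x}^{2}(t_0^-)$ would not do), and confirming $1-\eta_{2}(\xi_{1}+\epsilon)>0$ so that $\bar{x}(t_k^+)$ never vanishes and every ratio in the recursion stays meaningful.
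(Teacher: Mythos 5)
Your proposal is correct and is essentially the argument the paper intends: the paper derives the corollary ``directly'' from Lemma~\ref{lemMainEstimate}, and your induction (initial ratio $V(0)/\bar{x}^{2}(0)$ handles $k=0$, the bound $C$ from \eqref{ineqCkEstimate} handles all $k\ge 1$, with the $\max$ in $T$ covering both) is precisely that direct derivation spelled out. Your two supplementary checks — that $1-\eta_{2}(\xi_{1}+\epsilon)>0$ keeps $\bar{x}(t_{k}^{+})\ne 0$, and that the initial data are read at $t_{0}^{+}$ — are consistent with how the paper uses these quantities in the proof of Theorem~\ref{thmMain}.
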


Now we can give the proof of Theorem \ref{thmMain}.

\begin{proof}First, since $\eta_{2}<1/\xi_{1}$, we can
choose $0<\epsilon<\min\{\xi_{1},1/\eta_{2}-\xi_{1}\}$. From
\eqref{ineqXiEsatimate}, we have
\begin{eqnarray}
|\bar{x}(t_{k}^{+})|\le
[1-\eta_{1}(\xi_{1}-\epsilon)]^{k}|\bar{x}(0)|
\end{eqnarray}
which implies
\begin{eqnarray*}
\lim_{k\to\infty}\bar{x}(t_{k}^{+})=0
\end{eqnarray*}
since $1-\eta_{1}(\xi_{1}-\epsilon)<1$. Combining the fact that
$\bar{x}(t)$ is constant on each $(t_{k},t_{k+1})$, we have
\begin{eqnarray*}
\lim_{t\to\infty}\bar{x}(t)=0.
\end{eqnarray*}

On the other hand, from Corollary \ref{corMainEstimate}, let $\Delta
t_{k}\ge T$, we have
\begin{eqnarray*}
V(t_{k}^{+})\le C\bar{x}^{2}(t_{k}^{+}),
\end{eqnarray*}
which leads to
\begin{eqnarray*}
\lim_{k\to\infty}V(t_{k}^{+})=0.
\end{eqnarray*}

Since on each $(t_{k},t_{k+1})$,
\begin{eqnarray*}
V(t)\le V(t_{k}^{+})e^{\frac{-\lambda_{2}}{\max_{i}\{\xi_{i}\}}
(t-t_{k})},
\end{eqnarray*}
this also implies
\begin{eqnarray*}
\lim_{t\to\infty}V(t)=0
\end{eqnarray*}
and
\begin{eqnarray*}
\lim_{t\to\infty}[x_{i}(t)-\bar{x}(t)]=0.
\end{eqnarray*}

Thus,
\begin{eqnarray*}
\lim_{t\to\infty}x_{i}(t)=\lim_{t\to\infty}[x_{i}(t)-\bar{x}(t)]+\lim_{t\to\infty}\bar{x}(t)
=0.
\end{eqnarray*}
The proof is completed.
\end{proof}
\begin{remark}
In \cite{Zhou_TCAS_2011}, Zhou et.al discussed pinning complex
delayed dynamical networks by a single impulsive controller. In that
paper, the authors proposed a novel model. However, the coupling
matrix $A$ is assumed to be a symmetric irreducible matrix and
orthogonal eigen-decomposition is used and plays a key role.
Therefore, the approach can not apply to our case.
\end{remark}

\begin{remark}
In \cite{CaoJD_TNN_2012}, Lu et.al, discussed synchronization
control for nonlinear stochastic dynamical networks by impulsive
pinning strategy. In that strategy, at each impulse time point
$t_{k}$, the authors select several nodes with largest errors, and
adding controllers to those nodes. Therefore, one needs to observe
all states $x_{i}(t_{k})$ at each $t_{k}$. In our strategy, we only
need to know the state $x_{1}(t_{k})$ and one controller is enough.
\end{remark}

From Theorem \ref{thmMain}, we can have the following corollary in
the case that the impulsive strength is a constant.
\begin{corollary}
Suppose $-L\in \mathbf{A}_{2}$, or equivalently, the underlying
graph is strongly connected, and $b_{k}=b\in (0,1/\xi_{1})$ for each
$k$. If $\bar{x}(0)\ne 0$, then there exists a constant $T>0$ such
that \eqref{eqnPulse} will reach consensus on $s$ when $\Delta
t_{k}\ge T$ for each $k$.
\end{corollary}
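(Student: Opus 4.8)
The plan is to read this corollary as the degenerate case of Theorem~\ref{thmMain} in which the admissible interval $[\eta_{1},\eta_{2}]$ shrinks to a single point. Concretely, I would set $\eta_{1}=\eta_{2}=b$. Because the hypothesis $b\in(0,1/\xi_{1})$ gives $0<\eta_{1}\le\eta_{2}<1/\xi_{1}$, and because the constant sequence $b_{k}\equiv b$ obviously lies in $[\eta_{1},\eta_{2}]$ for every $k$, every hypothesis of Theorem~\ref{thmMain} is met (the remaining ones, $-L\in\mathbf{A}_{2}$ and $\bar{x}(0)\ne 0$, are assumed outright). Theorem~\ref{thmMain} then delivers a constant $T>0$ such that $\Delta t_{k}\ge T$ for all $k$ forces \eqref{eqnPulse} to reach consensus on $s$; after the standing normalisation $s=0$, $r=1$ used throughout Section~\ref{secMainResults} this is exactly the statement proved for \eqref{eqnPulse1}, and the case of general $s$ and general root follows by the same change of variables, so nothing extra is needed.

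If one prefers an explicit $T$ to a bare existence claim, I would instead replay Corollary~\ref{corMainEstimate} with $\eta_{1}=\eta_{2}=b$: pick any $\epsilon$ with $0<\epsilon<\min\{\xi_{1},\,1/b-\xi_{1}\}$, which is possible precisely because $b<1/\xi_{1}$, and set
\[
C=\frac{[2+4b^{2}(1-\xi_{1})]\epsilon^{2}/\xi_{1}+4b^{2}\xi_{1}(1-\xi_{1})}{[1-b(\xi_{1}+\epsilon)]^{2}},\qquad
T=\frac{\max_{i}\{\xi_{i}\}}{\lambda_{2}}\Big[\max\{\ln C,\ \ln[V(0)/\bar{x}^{2}(0)]\}+\ln\frac{\xi_{1}}{\epsilon^{2}}\Big].
\]
Then the two estimates \eqref{ineqJumpBar1} and \eqref{ineqCkEstimate} of Lemma~\ref{lemMainEstimate} yield $|\bar{x}(t_{k}^{+})|\le[1-b(\xi_{1}-\epsilon)]^{k}|\bar{x}(0)|\to 0$ and $V(t_{k}^{+})\le C\,\bar{x}^{2}(t_{k}^{+})\to 0$; combining these with the between-impulse decay $V(t)\le V(t_{k}^{+})e^{-\lambda_{2}(t-t_{k})/\max_{i}\{\xi_{i}\}}$ and the piecewise constancy of $\bar{x}(t)$ on each $(t_{k},t_{k+1})$ gives $x_{i}(t)\to 0$, i.e.\ consensus on $s$, exactly as at the end of the proof of Theorem~\ref{thmMain}.

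I do not expect any real obstacle. The one detail worth stating explicitly is that the permissible-strength interval in Theorem~\ref{thmMain} is allowed to be closed and hence may degenerate to a single point, so the choice $\eta_{1}=\eta_{2}=b$ is legitimate and the given constant $b$ is indeed covered; once that is observed the corollary is immediate, and the explicit route above merely records the value of $T$ that this specialisation produces.
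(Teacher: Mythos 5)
Your specialisation $\eta_{1}=\eta_{2}=b$ is exactly how the paper obtains this corollary from Theorem~\ref{thmMain} (the paper states it as an immediate consequence without further argument), and your optional explicit $T$ just re-reads Corollary~\ref{corMainEstimate} with the same substitution. The proposal is correct and takes essentially the same approach as the paper.
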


\section{Pinning consensus on graphs with spanning
trees}\label{secGeneralResults} In this section, we will generalize
the results obtained in previous section to graphs with spanning
trees. In such case, by suitable arrangement, we can assume that $L$
has the following $m\times m$ block form:
\begin{eqnarray}\label{eqnLaplaceReducible}
L=\left[
\begin{array}{ccccc}
L_{11} & 0&0&\cdots&0\\
L_{21} & L_{22}&0&\cdots&0\\
\vdots&\cdots&\ddots&\cdots&0\\
L_{m1}&\cdots&\cdots&\cdots&L_{mm}
\end{array}\right]
\end{eqnarray}
where $-L_{ii}\in \mathbb{R}^{p_{i}\times p_{i}}$ is irreducible,
and $[L_{i1},\cdots,L_{i(i-1)}]\ne 0$ for $i=2,\cdots,m$. Let
$[\xi_{1},\cdots,\xi_{n}]^{\top}$ be the normalized left eigenvector
of $L$ corresponding to the eigenvalue $0$. From Lemma
\ref{lemMatrix}, $\xi_{i}>0$ for $i=1,\cdots,p_{1}$, and $\xi_{i}=0$
for $i=p_{1}+1,\cdots,n$. Thus $\bar{x}(t)=\sum_{i=1}^{p_{1}}\xi_{i}
x_{i}(t)$.

We will prove
\begin{theorem}\label{thmGeneral}
Suppose the underlying graph is of the form
(\ref{eqnLaplaceReducible}), and there exist
$0<\eta_{1}\le\eta_{2}<1/\xi_{1}$ such that $\eta_{1}\le
b_{k}\le\eta_{2}$ for each $k$. If $\bar{x}(0)\ne 0$, then the
consensus algorithm \eqref{eqnPulse1} can reach consensus on a given
value $s$ when $\Delta t_{k}\ge T$ for a large enough $T$.
\end{theorem}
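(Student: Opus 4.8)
The plan is to exploit the \emph{cascade} structure of the block-lower-triangular form \eqref{eqnLaplaceReducible}: the leading block of coordinates $x^{(1)}(t)=[x_1(t),\dots,x_{p_1}(t)]^\top$ evolves autonomously, and each subsequent block is a Hurwitz-stable linear system driven by the earlier blocks. First I would observe that, since $L$ is block lower triangular, rows $1,\dots,p_1$ of $L$ have all their nonzero entries inside $L_{11}$, so $L_{11}$ has zero row sums and $-L_{11}\in\mathbf{A}_2$; moreover $\xi_+=[\xi_1,\dots,\xi_{p_1}]^\top$ satisfies $\xi_+^\top L_{11}=0$ with $\sum_{i=1}^{p_1}\xi_i=1$, i.e.\ it is the normalized left eigenvector of $L_{11}$ for the eigenvalue $0$, and $\bar{x}(t)=\sum_{i=1}^{p_1}\xi_i x_i(t)$ is exactly the corresponding averaging functional. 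The subsystem for $x^{(1)}$ together with the single impulse on $x_1$ is thus precisely a system of the form \eqref{eqnPulse1} on the strongly connected graph with Laplacian $L_{11}$ and pinning node $1$. Applying Theorem \ref{thmMain} to it (with the same $\eta_1,\eta_2$ and the associated threshold $T_1$ built from the smallest positive eigenvalue of $\Xi_1 L_{11}+L_{11}^\top\Xi_1$, where $\Xi_1=\diag[\xi_1,\dots,\xi_{p_1}]$) gives $\lim_{t\to\infty}x_i(t)=0$ for $i=1,\dots,p_1$ whenever $\Delta t_k\ge T_1$, and in particular $\lim_{t\to\infty}\bar{x}(t)=0$.

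Next I would run an induction on the block index $i=2,\dots,m$. Assume $x^{(j)}(t)\to 0$ for all $j<i$. Block $i$ obeys $\dot{x}^{(i)}(t)=-L_{ii}x^{(i)}(t)-\sum_{j=1}^{i-1}L_{ij}x^{(j)}(t)$ with no impulses (the impulse acts only on node $1$, which lies in the first block), so the forcing term $g^{(i)}(t):=-\sum_{j<i}L_{ij}x^{(j)}(t)$ is bounded, piecewise continuous, and $g^{(i)}(t)\to 0$. The structural fact I need is that $-L_{ii}$ is Hurwitz. Indeed $L_{ii}$ is a $Z$-matrix with positive diagonal, and since the full rows of $L$ sum to zero, each row of $L_{ii}$ sums to minus the (nonpositive) entries of $[L_{i1},\dots,L_{i(i-1)}]$ in that row, hence is $\ge 0$, and strictly positive in at least one row because $[L_{i1},\dots,L_{i(i-1)}]\ne 0$; combined with the irreducibility of $-L_{ii}$ this makes $L_{ii}$ irreducibly diagonally dominant, hence a nonsingular $M$-matrix, so all eigenvalues of $-L_{ii}$ have negative real part. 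Then the variation-of-constants formula $x^{(i)}(t)=e^{-L_{ii}(t-t_0)}x^{(i)}(t_0)+\int_{t_0}^t e^{-L_{ii}(t-\tau)}g^{(i)}(\tau)\,d\tau$ together with a bound $\|e^{-L_{ii}t}\|\le M e^{-\alpha t}$ and $g^{(i)}(t)\to 0$ yields $x^{(i)}(t)\to 0$.

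Combining the base case with the inductive step gives $\lim_{t\to\infty}x_i(t)=0$ for all $i=1,\dots,n$, which, after the reduction $s=0$ made at the start of Section \ref{secMainResults}, is exactly consensus on the prescribed value $s$; the required threshold is $T=T_1$ from Theorem \ref{thmMain} applied to the leading block, and no larger $T$ is needed for the trailing blocks. The step I expect to be the main obstacle is verifying that each $-L_{ii}$ with $i\ge 2$ is Hurwitz: this is precisely where the spanning-tree hypothesis enters, through the condition $[L_{i1},\dots,L_{i(i-1)}]\ne 0$ that produces a strictly positive row sum and thereby removes the zero-eigenvalue degeneracy that an isolated Laplacian block would otherwise have; once that is in place, the cascade/input-to-state-stability argument is routine.
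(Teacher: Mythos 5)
Your proposal is correct and follows essentially the same route as the paper: apply Theorem \ref{thmMain} to the autonomous leading block (whose Laplacian $L_{11}$ satisfies $-L_{11}\in\mathbf{A}_2$ with left eigenvector $\xi_+$), then handle the trailing blocks by induction via variation of constants, using that each $L_{ii}$, $i\ge 2$, is a nonsingular $M$-matrix because $[L_{i1},\dots,L_{i(i-1)}]\ne 0$, so a vanishing bounded forcing yields $X_i(t)\to 0$. Your justification of the $M$-matrix step is in fact stated more carefully than the paper's, but the argument is the same.
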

\begin{proof}
Let $x(t)=[X_{1}^{\top}(t),\cdots, X_{m}^{\top}(t)]^{\top}$ with
$X_{i}(t)=[x_{m_{i}+1}(t),\cdots,x_{m_{i+1}}(t)]^{\top}$, where
$m_{1}=0$ and $m_{i+1}=m_{i}+p_{i}$. Since
$\bar{x}(0)=\sum_{i=1}^{p_{1}}\xi_{i}x_{i}\ne 0$, by applying
Theorem \ref{thmMain} to the subsystem of $X_{1}(t)$, we can find
$T>0$ such that if $\Delta t_{k}\ge T$ for each $k$, then
\begin{eqnarray*}
\lim_{t\to\infty}x_{i}(t)=0,\quad i=1,\cdots,p_{1}.
\end{eqnarray*}

Consider the subsystem of $X_{2}(t)$, we have:
\begin{eqnarray}\label{eqnPart2}
\dot{X}_{2}(t)=-L_{22}X_{2}(t)-L_{21}X_{1}(t).
\end{eqnarray}

Denote $Y_{2}(t)=-L_{21}X_{1}(t)$. Then \eqref{eqnPart2} can be
rewritten as:
\begin{eqnarray}
\dot{X}_{2}(t)=-L_{22}X_{2}(t)+Y_{2}(t)
\end{eqnarray}
Thus,
\begin{eqnarray}\label{ineqX2Estimate}
X_{2}(t)=e^{-L_{22}t}X_{2}(0)+\int_{0}^{t}e^{-L_{22}(t-s)}Y_{2}(s)ds.
\end{eqnarray}
Since the $L_{21}\ne 0$, at least one row sum of $L_{22}$  is
negative, which implies that $L_{22}$ is a non-singular M-matrix and
its eigenvalues $\mu_{1}$, $\cdots$, $\mu_{p_{2}}$ can be arranged
as $0<Re(\mu_{1})\le\cdots\le Re(\mu_{p_{2}})$. Then,
\begin{eqnarray*}
\|e^{-L_{22}t}\|\le Ke^{-Re(\mu_{1})t}
\end{eqnarray*}
for some constant $K>0$. And
\begin{eqnarray*}
\|X_{2}(t)\|&\le&
K\|X_{2}(0)\|e^{-Re(\mu_{1})t}\\
&&+K\int_{0}^{t}e^{-Re(\mu_{1})(t-s)}\|Y_{2}(s)\|ds
\end{eqnarray*}
It is obvious that
\begin{eqnarray*}
\lim_{t\rightarrow\infty}K\|X_{2}(0)\|e^{-Re(\mu_{1})t}=0
\end{eqnarray*}

To show
$$\lim_{t\to\infty}\|X_{2}(t)\|=0,$$ we only need to estimate the
second term on the righthand side of \eqref{ineqX2Estimate}.

Since $\lim_{t\to\infty}\|Y_{2}(t)\|=0$, for any $\epsilon>0$, there
exists $t_{\epsilon}>0$ such that $\|Y_{2}(t)\|\le \epsilon$ for
each $t\ge t_{\epsilon}$. Furthermore, $Y_{2}(t)$ is uniformly
bounded. Let $\overline{Y}_{2}>0$ be an upper bound of $Y_{2}(t)$.
Then for
$t>t_{\epsilon}+\displaystyle\frac{1}{Re(\mu_{1})}\ln\frac{\overline{Y}_{2}}{\epsilon}$,
\begin{align*}
&\int_{0}^{t}e^{-Re(\mu_{1})(t-s)}\|Y_{2}(s)\|ds
=\int_{0}^{t_{\epsilon}}e^{-Re(\mu_{1})(t-s)}\|Y_{2}(s)\|ds\\
&+\int_{t_{\epsilon}}^{t}e^{-Re(\mu_{1})(t-s)}\|Y_{2}(s)\|ds\\
&\le\overline{Y}_{2}\int_{0}^{t_{\epsilon}}e^{-Re(\mu_{1})(t-s)}ds
+\epsilon\int_{t_{\epsilon}}^{t}e^{-Re(\mu_{1})(t-s)}ds\\
&=\frac{\overline{Y}_{2}}{Re(\mu_{1})}e^{-Re(\mu_{1})t}[e^{Re(\mu_{1})t_{\epsilon}}-1]\\&
+\frac{\epsilon}{Re(\mu_{1})}[1-e^{-Re(\mu_{1})(t-t_{\epsilon})}]\\
&=\frac{\overline{Y}_{2}}{Re(\mu_{1})}e^{-Re(\mu_{1})(t-t_{\epsilon})}[1-e^{-Re(\mu_{1})t_{\epsilon}}]\\&
+\frac{\epsilon}{Re(\mu_{1})}[1-e^{-Re(\mu_{1})(t-t_{\epsilon})}]\\
&\le\frac{\epsilon}{Re(\mu_{1})}[1-e^{-Re(\mu_{1})t_{\epsilon}}]
+\frac{\epsilon}{Re(\mu_{1})}\\
&\le\frac{2\epsilon}{Re(\mu_{1})}
\end{align*}
Because $\epsilon$ is arbitrary, we have
$$
\lim_{t\to\infty}\int_{0}^{t}e^{-Re(\mu_{1})(t-s)}\|Y_{2}(s)\|ds=0
$$
Thus,
$$
\lim_{t\to\infty}\|X_{2}(t)\|=0.
$$

For $i=3,\cdots,n$, we have
\begin{eqnarray*}
\dot{X}_{i}(t)=-L_{ii}X_{i}(t)-Y_{i}(t),
\end{eqnarray*}
where $Y_{i}(t)=\sum_{j=1}^{i-1}L_{ij}X_{j}(t)$.

By induction, if we already have
$$
\lim_{t\to\infty}\|X_{j}(t)\|=0
$$
for $j=1,\cdots,i-1$, then we have
\begin{eqnarray}
\lim_{t\to\infty}Y_{i}(t)=0.
\end{eqnarray}

By a similar analysis as above, we can show that
$$
\lim_{t\to\infty}\|X_{i}(t)\|=0.
$$
\end{proof}

Similarly, we can have a corollary from Theorem \ref{thmGeneral}
when the impulsive strength is constant.
\begin{corollary}
Suppose the underlying graph is of the form
(\ref{eqnLaplaceReducible}), and $b_{k}=b\in (0,1/\xi_{1})$ for each
$k$. If $\bar{x}(0)\ne 0$, then the consensus algorithm
\eqref{eqnPulse1} can reach consensus on a given value $s$ when
$\Delta t_{k}\ge T$ for a large enough $T$.
\end{corollary}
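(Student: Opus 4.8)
The plan is to obtain this statement as the degenerate, constant-strength case of Theorem~\ref{thmGeneral} rather than to rerun the whole argument. Given $b\in(0,1/\xi_{1})$, I would simply set $\eta_{1}=\eta_{2}=b$; then $0<\eta_{1}\le\eta_{2}<1/\xi_{1}$ holds, and the constant sequence $b_{k}\equiv b$ trivially satisfies $\eta_{1}\le b_{k}\le\eta_{2}$ for every $k$. Since $\bar{x}(0)\ne0$ is assumed, all the hypotheses of Theorem~\ref{thmGeneral} are in force, so there is a $T>0$ such that $\Delta t_{k}\ge T$ for each $k$ forces \eqref{eqnPulse1} to reach consensus on $s$. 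That is exactly the claim.

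If one prefers a self-contained derivation, the same reduction works at the level of the underlying estimates: with $b_{k}\equiv b$ one may take $\eta_{1}=\eta_{2}=b$ in Lemma~\ref{lemMainEstimate} and Corollary~\ref{corMainEstimate}, so the contraction factor becomes $1-b(\xi_{1}-\epsilon)<1$ and the bound $C$ of Corollary~\ref{corMainEstimate} specializes to a single explicit constant. Choosing $\epsilon\in(0,\min\{\xi_{1},1/b-\xi_{1}\})$ (a nonempty interval precisely because $b<1/\xi_{1}$), applying the proof of Theorem~\ref{thmMain} to the irreducible block $X_{1}(t)$, and then propagating $\lim_{t\to\infty}\|X_{i}(t)\|=0$ up the triangular structure \eqref{eqnLaplaceReducible} by the same induction used in the proof of Theorem~\ref{thmGeneral}, yields the conclusion.

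There is essentially no obstacle here: the only point worth checking is that a single admissible value $b$ lies inside the admissible family of Theorem~\ref{thmGeneral}, which is immediate once one allows the degenerate interval $[\eta_{1},\eta_{2}]=\{b\}$. Both statements exclude the endpoint $1/\xi_{1}$, so no boundary case arises, and the dependence of $T$ on $b$, $\xi_{1}$, $\lambda_{2}$, $\max_{i}\{\xi_{i}\}$, and the initial data is inherited verbatim from Corollary~\ref{corMainEstimate}.
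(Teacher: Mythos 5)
Your proposal is correct and matches the paper's (implicit) argument exactly: the corollary is obtained by taking $\eta_{1}=\eta_{2}=b$ in Theorem~\ref{thmGeneral}, which is admissible since $b\in(0,1/\xi_{1})$ and $\bar{x}(0)\ne 0$. Nothing further is needed.
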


\section{Numerical Simulations}\label{secNumericalSimulation}
In this section we will provide two simple examples to illustrate
the theoretical results. The first example considers a strongly
connected graphs. And the second one concerns a graph that is not
strongly connected but has a spanning tree.

\subsection{Example 1}
In the first example, we consider a directed circular network. (Fig.
\ref{figNet1} shows an example of a circular network with $10$
nodes.) It is obvious that this network is strongly connected. If we
assign each edge with weight $1$, then the graph Laplacian is
\begin{eqnarray*}
L=\left[
\begin{array}{cccccccccc}
1&0&0&\cdots&0&0&0&-1\\
-1&1&0&\cdots&0&0&0&0\\
0&-1&1&\cdots&0&0&0&0\\
0&0&-1&\ddots&\vdots&\vdots&\vdots&\vdots\\
\vdots&\vdots&\vdots&\ddots&1&0&0&0\\
0&0&0&\cdots&-1&1&0&0\\
0&0&0&\cdots&0&-1&1&0\\
0&0&0&\cdots&0&0&-1&1\\
\end{array} \right]
\end{eqnarray*}

Then we have $\xi_{i}=0.01$ for each $i$, and
$\lambda_{2}=3.9465\times 10^{-5}$.
\begin{figure}
\centering
\includegraphics[width=0.5\textwidth]{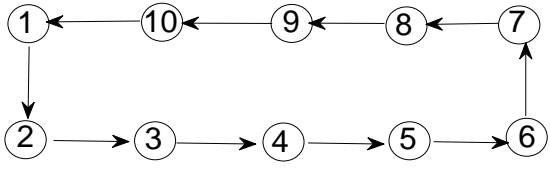}
\caption{A circular network consisting of $10$ nodes.}
\label{figNet1}
\end{figure}

Randomly choose an initial value $x(0)$ whose $\bar{x}(0)=0.4886$.
The objective is to drive the network to reach a consensus on value
$0$. After calculation, we have
$$V(0)=0.01\sum_{i=1}^{100}[x_{i}(0)-\bar{x}(0)]^{2}=0.5935,$$
$$V(0)/\bar{x}^{2}(0)=2.4856.$$
Let $b_{k}=11$ for each $k$, then we can set $\eta_{1}=\eta_{2}=11$.
Choose $\epsilon=0.00999$. Then,
$$C=\frac{[2+4\eta_{2}^{2}(1-\xi_{1})]\epsilon^{2}/\xi_{1}
+4\eta_{2}^{2}\xi_{1}(1-\xi_{1})}{[1-\eta_{2}(\xi_{1}+\epsilon)]^{2}}
=15.7641.$$

Then we get the lower bound for the duration between each successive
impulse is
$$T=\frac{\max_{i}\{\xi_{i}\}}{\lambda_{2}}\ln \frac{C\xi_{1}}{\epsilon^{2}}
=1.8662\times 10^{3}.$$

In the simulation, we set $\Delta t_{k}=1867$ for each $k$. The
simulation result is presented in Figs.\ref{figSimu1},\ref{figVar1}.
Fig.\ref{figSimu1} shows the trajectories of the network, and
Fig.\ref{figVar1} shows the variations of the trajectories with
respect to time $t$ which is defined as
$$\var(t)=\sum_{i=1}^{n}|x_{i}(t)|.$$

It can be seen that the network will asymptotically reach a
consensus on value $0$.
\begin{figure}
\centering
\includegraphics[width=0.5\textwidth,height=0.41\textwidth]{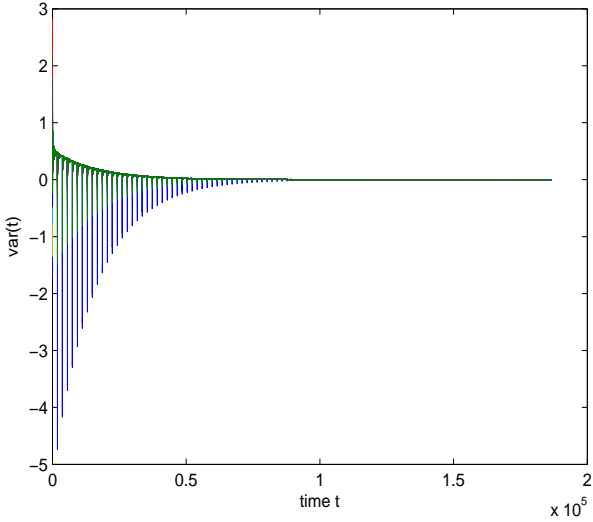}
\caption{Pinning consensus to $0$ on a circular network with $100$
nodes. \label{figSimu1}}
\end{figure}
\begin{figure}
\centering
\includegraphics[width=0.5\textwidth]{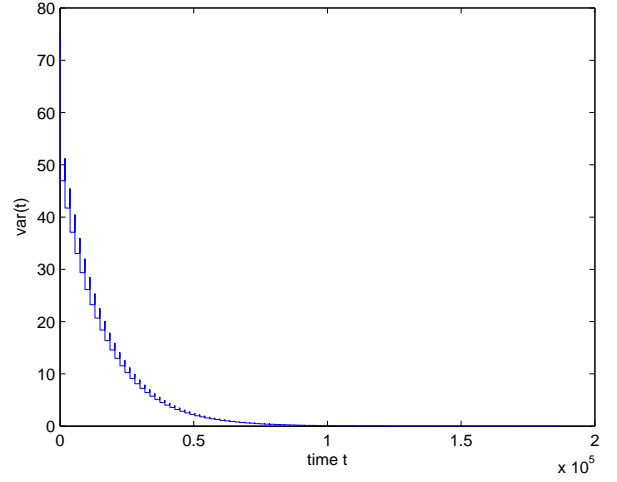}
\caption{The variation of the trajectories of the circular network
with $100$ nodes.} \label{figVar1}
\end{figure}

\subsection{Example 2}
In this example, we consider a network that is not strongly
connected but has a spanning tree. We start from a circular network
with $10$ nodes (shown in Fig.\ref{figNet1}) and construct a larger
network by randomly adding new nodes to the network. At each step,
randomly choose a node $i$ from the existing network, then a new
node $j$ is added to the network such that there is a directed edge
from $i$ to $j$. Continuing this procedure until the network has
$100$ nodes, we obtain a graph that has spanning trees but is not
strongly connected. If we assign each edge with weight $1$, then in
the graph Laplacian \eqref{eqnLaplaceReducible},
\begin{eqnarray*}
L_{11}=\left[
\begin{array}{cccccccccc}
1&0&0&0&0&0&0&0&0&-1\\
-1&1&0&0&0&0&0&0&0&0\\
0&-1&1&0&0&0&0&0&0&0\\
0&0&-1&1&0&0&0&0&0&0\\
0&0&0&-1&1&0&0&0&0&0\\
0&0&0&0&-1&1&0&0&0&0\\
0&0&0&0&0&-1&1&0&0&0\\
0&0&0&0&0&0&-1&1&0&0\\
0&0&0&0&0&0&0&-1&1&0\\
0&0&0&0&0&0&0&0&-1&1
\end{array} \right]
\end{eqnarray*}
Thus $\xi_{i}=0.1$ for $1\le i\le 10$, $\xi_{i}=0$ for $11\le i\le
100$, and $\lambda_{2}=0.3820$. Randomly choose the initial value
$x(0)$ where $\bar{x}(0)=0.3909$. The objective is to drive the
network to reach consensus on the value $0$. After calculation, we
have:
$$V(0)=\sum_{i=1}^{10}\xi_{i}[x_{i}(0)-\bar{x}(0)]^{2}=0.6369.$$
$$V(0)/\bar{x}^{2}(0)=4.1677.$$
Let $b_{k}=5$ for each $k$, then we can set $\eta_{1}=\eta_{2}=5$.
Choose $\epsilon=0.09$. Then we have
$$C=\frac{[2+4\eta_{2}^{2}(1-\xi_{1})]\epsilon^{2}/\xi_{1}
+4\eta_{2}^{2}\xi_{1}(1-\xi_{1})}{[1-\eta_{2}(\xi_{1}+\epsilon)]^{2}}
=7.3280.$$

Thus the lower bound for the intervals between each successive
impulse is
$$T=\frac{\max_{i}\{\xi_{i}\}}{\lambda_{2}}\ln \frac{C\xi_{1}}{\epsilon^{2}}
=14.8720.$$


In the simulation, we choose $\Delta t_{k}=15$. The simulation
result is presented in Figs.\ref{figSimu2}, \ref{figVar2}. It can be
seen that the network will asymptotically reach a consensus on $0$.
\begin{figure}
\centering
\includegraphics[width=0.5\textwidth]{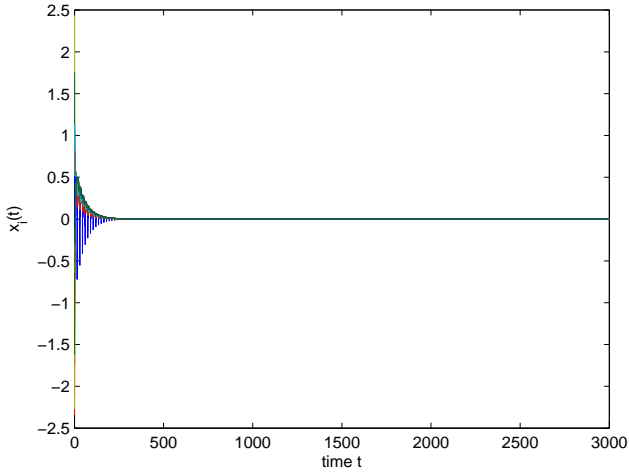}
\caption{Pinning consensus to $0$ on the graph that has spanning
trees. \label{figSimu2}}
\end{figure}
\begin{figure}
\centering
\includegraphics[width=0.5\textwidth]{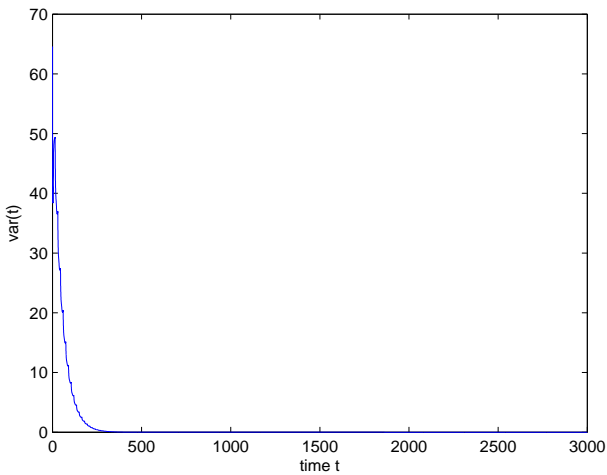}
\caption{The variation of the trajectories on a graph that has
spanning trees.} \label{figVar2}
\end{figure}
\section{Conclusions}\label{secConclusion}
In this paper, we investigate pinning consensus in networks of
multiagents via a single impulsive controller. First, we prove a
sufficient condition for a network with a strongly connected
underlying graph to reach consensus on a given value. Then we extend
the result to networks with a spanning tree. Interestingly, we find
the permissible range of the impulsive strength is determined by the
left eigenvector of the graph Laplacian corresponding to the zero
eigenvalue and the pinning node we choose. Besides, a sparse enough
impulsive pinning on one node can always drive the network to reach
consensus on a prescribed value. Examples with numerical simulations
are also provided to illustrate the theoretical results. The pinning
synchronization in complex networks via a single impulsive
controller is an interesting issue, which will be worked out soon.

\end{document}